\newcommand{\tor}{{\Ss^1\times\Ss^1}}
\newcommand{\ai}{\overrightarrow{i}}
\newcommand{\aj}{\overrightarrow{j}}
\newcommand{\Ss}{\mathbb{S}}
\newcommand{\Aa}{\mathcal{A}}
\newcommand{\I}{\mathbb{I}}
\newcommand{\N}{\mathbb{N}}
\newcommand{\Z}{\mathbb{Z}}
\newcommand{\R}{\mathbb{R}}
\def\1{\mathbbm{1}}%
\newtheorem{theorem}{Theorem} %[section]
\newtheorem{proposition}{Proposition} %[section]
\newtheorem{definition}{Definition} %[section]
\newtheorem{corollary}{Corollary} %[section]
\newtheorem{example}{Example}
\theoremstyle{definition}
\title{Chebyshev polynomials and the Frohman-Gelca formula}
\author{Hoel Queffelec}
\address{Universit\'e Paris Diderot-Paris 7, Institut de Math\'ematiques de Jussieu - Paris Rive Gauche CNRS UMR 7586, B\^atiment Sophie Germain, Case 7012, 75205 Paris Cedex 13, France.}
\email{queffelec@math.jussieu.fr}
\author{Heather M. Russell}
\address{Department of Mathematics and Computer Science, Washington College, 300 Washington Avenue, Chestertown, MD 21620 U.S.A.}
\email{hrussell2@washcoll.edu}
\begin{document}

%
% ==============================================================================

\maketitle

%%%%%%%%%%%%%%%%%%%%%%%%%% Abstract for amsart class %%%%%%%%%%%%%%%%%%%%%%%%%%%%%%%%%%%%
\begin{abstract}
Using Chebyshev polynomials, C. Frohman and R. Gelca introduce in \cite{FG} a basis of the Kauffman bracket skein module of the torus. This basis is especially useful because the Jones-Kauffman product can be described via a very simple \emph{Product-to-Sum} formula. Presented in this work is a diagrammatic proof of this formula, which emphasizes and demystifies the role played by Chebyshev polynomials.
\end{abstract}

\par

% ###############################################################################
%
\section*{Introduction}
%
% ###############################################################################

The Jones polynomial, originally defined in terms of Von Neumann algebras or representation theory of the quantum group $U_q(\mathfrak{sl}_2)$, also admits a diagrammatic description given by the Kauffman bracket~\cite{KauffBracket}. Starting from the diagram of a knot, the process is built on two easy local rules: one replaces each crossing by a formal sum of two different smoothings, and the other replaces each circle by the elementary polynomial $-A^2-A^{-2}$. At the end, one obtains a Laurent polynomial in the variable $A$ multiplied by the empty diagram. Adjusting this polynomial by a writhe factor, one gets the original Jones invariant.

This process was first defined for knot diagrams drawn on a disk, but one could also perform this process in situations where a knot diagram is drawn on an orientable surface. This would correspond to computations for knots embedded in the thickened surface. Because of the richer structure of the surface, knot diagrams can no longer always be reduced to the empty one. The skein module~\cite{Prz91} is thus defined as the natural place where the process ends.

Kauffman's local smoothing rules can be used to define the so-called Jones-Kauffman algebra structure on the skein module. This is induced by superposition of curves and smoothing of created crossings. The skein module of thickened surfaces and its related algebra structure have been extensively studied in works by Bullock, Frohman, Gelca, Kania-Bartoszynska, Przytycki~\cite{BuFK,BullPr,FG}. Although much information about this module can be obtained from homotopy of curves on the associated surface, it remains a difficult task to find a basis of the module in which the product can be easily described.

Frohman and Gelca \cite{FG} addressed this problem in the torus case introducing a new basis of the skein module where the result of the product of two basis elements is a sum of only two other basis elements. This elegant and somewhat surprising formula relies on an extensive but somewhat enigmatic use of Chebyshev's polynomial of the first kind. Sallenave~\cite{Sal1} independently and without using the Chebyshev polynomials obtained similar results, which later led to the idea of an \emph{oriented} skein module as it appears in \cite{MR1834496}. In the present work, we study and use these diagrammatic tools to give a new proof of Frohman and Gelca's formula with a focus on clarifying the role of the Chebyshev polynomials as a change of basis. The argument presented here clearly splits the proof of Frohman-Gelca's formula into a technical result which does not involve Chebyshev polynomials and a change of basis argument where they are the key ingredient. Inspirational ideas can also be found in works of Przytycki~\cite{Prz98}.

It is very interesting to note that the module we introduce actually comes from the categorification proposed by Asaeda, Przytycki and Sikora~\cite{APS}: the categorification of the skein module turns out to highlight some of the hidden structure of the original object! Our interest in developing a better understanding of the product in the Jones-Kauffman algebra is largely motivated by the hope that it could be lifted to the categorified level and, for example, play a role in a surgery process for 3-manifold extensions of Khovanov homology.

Several recent works have studied the interplay between skein algebras and cluster algebras \cite{FoPy,ThurstonSkein} often taking the quantum variable to be $1$. The surprising resemblance between the Frohman-Gelca formula and a mutation formula is also a motivation for the study of related modules. The \emph{Product-to-Sum} formula itself has also been recently receiving a growing interest in particular at roots of unity \cite{BW1,BW2}. Indeed, Bonahon-Wong's {\it miraculous cancellation} extensively uses the Frohman-Gelca formula in the punctured torus, which only holds with an adequate choice of $A$ as a root of the unity. The oriented skein module presented here can also be extended to any orientable punctured surface. Its symmetric part then appears as a quotient of the entire skein module, but it allows one to keep $A$ generic and to have generalizations of the Frohman-Gelca formula~\cite{Queff_PhD}. It remains to be seen if nice presentations via generators and relations of the oriented skein module exist beyond the torus case.

\vspace{.5cm}

\noindent \textbf{Acknowledgements:} We wish to thank Charlie Frohman for his inspiring suggestions, and Francis Bonahon and Helen Wong for interesting discussions. H. Q. also wants to thank his advisors Christian Blanchet and Catharina Stroppel for their constant support and numerous comments on this work, as well as Louis Funar for all his comments. H. R. was supported by the John Templeton Foundation.

% ===============================================================================
%
\section{Jones-Kauffman algebra}
%
% ===============================================================================

% -------------------------------------------------------------------------------
%
\subsection{Usual skein module: the plane case}
%
% -------------------------------------------------------------------------------

In 1984, Jones~\cite{Jones} introduced a new polynomial invariant for oriented knots with origins coming from Von Neumann algebras. Kauffman~\cite{KauffBracket} developed a straightforward, combinatorial process for obtaining this polynomial in which one first computes what we now refer to as the \emph{Kauffman bracket} corresponding to a knot diagram and then multiplies by a certain writhe factor. To calculate the Kauffman bracket, we proceed by recursively resolving crossings and removing circles  according to rules \ref{KauffBrSmoothing} and \ref{KauffBrCircle}  to produce a Laurent polynomial in the variable $A$. The Kauffman bracket is independent of orientation of a diagram.

\begin{equation}
\xy
 (0,0)*{
\begin{tikzpicture} [decoration={markings,
                         mark=at position 0.6 with {\arrow{>}};    }]
\draw[dashed] (.75,0) arc(0:360:.75);
\draw[very thick] (.5,-.5) -- (-.5,.5);
\draw[white, double, double distance=1.25pt, double=black, ultra thick] (-.5,-.5) -- (.5,.5); 
\end{tikzpicture}};
\endxy
\;\; =\;\; A\;\;
\xy
 (0,0)*{
\begin{tikzpicture} [decoration={markings,
                         mark=at position 0.6 with {\arrow{>}};    }]
\draw[dashed] (.75,0) arc(0:360:.75);
\draw[very thick] (-.5,-.5) .. controls (0,0) .. (-.5,.5);
\draw[very thick] (.5,-.5) .. controls (0,0) .. (.5,.5); 
\end{tikzpicture}};
\endxy
\;\; + \;\; A^{-1}\;\;
\xy
 (0,0)*{
\begin{tikzpicture} [decoration={markings,
                         mark=at position 0.6 with {\arrow{>}};    }]
\draw[dashed] (.75,0) arc(0:360:.75);
\draw[very thick] (-.5,-.5) .. controls (0,0) .. (.5,-.5);
\draw[very thick] (-.5,.5) .. controls (0,0) .. (.5,.5); 
\end{tikzpicture}};
\endxy \label{KauffBrSmoothing}
\end{equation}

\begin{equation}
\xy
 (0,0)*{
\begin{tikzpicture} [decoration={markings,
                         mark=at position 0.6 with {\arrow{>}};    }]
\draw[dashed] (.75,0) arc(0:360:.75);
\draw[very thick] (.5,0) arc(0:360:.5);
\end{tikzpicture}};
\endxy
\;\; =\;(-A^2-A^{-2}) \;\;
\xy
 (0,0)*{
\begin{tikzpicture} [decoration={markings,
                         mark=at position 0.6 with {\arrow{>}};    }]
\draw[dashed] (.75,0) arc(0:360:.75);
\end{tikzpicture}};
\endxy \label{KauffBrCircle}
\end{equation}

Indeed using rule \ref{KauffBrSmoothing} to smooth all crossings, one obtains a formal sum of unions of closed curves in the plane with coefficients in the ring $R=\Z[A,A^{-1}]$. Since closed curves in the plane are nothing but circles, applying rule \ref{KauffBrCircle} (where disjoint union is replaced by product) removes all curves from the sum yielding a single polynomial multiplied by {\it the empty curve}.

Starting from a knot diagram in the plane, one defines the Kauffman bracket to be the Laurent polynomial that appears as the coefficient of {\it the empty curve} once rules \ref{KauffBrSmoothing} and \ref{KauffBrCircle} have been applied exhaustively. This polynomial is invariant under Reidemeister moves II and III, and thus it is a so-called framed knot invariant. As mentioned above, for an oriented diagram the Kauffman bracket scaled by a certain writhe factor is a combinatorial version of the Jones polynomial. As an example, we compute the Kauffman Bracket of the Hopf link below.

\begin{example} \label{ex:KauffBracket}
We start by smoothing all crossings using rule \ref{KauffBrSmoothing}

\begin{eqnarray}
\xy
(0,0)*{
\begin{tikzpicture}[scale=1]
\draw [very thick] (0,0) arc (-180:0:.5);
\draw [very thick] (.8,0) arc (180:0:.5);
\draw [thick, white, double, double=black, double distance=1.25pt] (0,0) arc (180:0:.5);
\draw [thick, white, double, double=black, double distance=1.25pt] (.8,0) arc (-180:0:.5);
\end{tikzpicture}
};
\endxy
\quad &\mapsto& \quad
A\;
\xy
(0,0)*{
\begin{tikzpicture}[scale=1]
\draw [very thick] (0,0).. controls (0,-.8) and (.8,-.8) .. (.8,-.5) .. controls (.8,-.3) and (1,-.3) .. (1,-.5) .. controls (1,-.8) and (1.8,-.8) .. (1.8,0);
\draw [very thick] (.8,0) arc (180:0:.5);
\draw [very thick] (1,0) .. controls (1,-.3) and (.8,-.3) .. (.8,0);
\draw [thick, white, double, double=black, double distance=1.25pt] (0,0) arc (180:0:.5);
\end{tikzpicture}
};
\endxy
\; + A^{-1}\;
\xy
(0,0)*{
\begin{tikzpicture}[scale=1.2]
\draw [very thick] (0,0).. controls (0,-.8) and (.85,-.6) .. (.85,-.3) .. controls (.85,-.2) and (.8,-.1) .. (.8,0);
\draw [very thick] (1.8,0) .. controls (1.8,-.8) and (.95,-.6) .. (.95,-.3) .. controls (.95,-.2) and (1,-.1) .. (1,0);
\draw [very thick] (.8,0) arc (180:0:.5);
\draw [thick, white, double, double=black, double distance=1.25pt] (0,0) arc (180:0:.5);
\end{tikzpicture}
};
\endxy \nonumber \\
&\mapsto&  \quad
A^2\;
\xy
(0,0)*{
\begin{tikzpicture}[scale=1]
\draw [very thick] (0,0).. controls (0,-.8) and (.8,-.8) .. (.8,-.5) .. controls (.8,-.3) and (1,-.3) .. (1,-.5) .. controls (1,-.8) and (1.8,-.8) .. (1.8,0);
\draw [very thick] (0,0).. controls (0,.8) and (.8,.8) .. (.8,.5) .. controls (.8,.3) and (1,.3) .. (1,.5) .. controls (1,.8) and (1.8,.8) .. (1.8,0);
\draw [very thick] (1,0) .. controls (1,-.3) and (.8,-.3) .. (.8,0);
\draw [very thick] (1,0) .. controls (1,.3) and (.8,.3) .. (.8,0);
\end{tikzpicture}
};
\endxy
\;+\;
\xy
(0,0)*{
\begin{tikzpicture}[scale=1]
\draw [very thick] (0,0).. controls (0,.8) and (.85,.6) .. (.85,.3) .. controls (.85,.2) and (.8,.1) .. (.8,0);
\draw [very thick] (1.8,0) .. controls (1.8,.8) and (.95,.6) .. (.95,.3) .. controls (.95,.2) and (1,.1) .. (1,0);
\draw [very thick] (0,0).. controls (0,-.8) and (.8,-.8) .. (.8,-.5) .. controls (.8,-.3) and (1,-.3) .. (1,-.5) .. controls (1,-.8) and (1.8,-.8) .. (1.8,0);
\draw [very thick] (1,0) .. controls (1,-.3) and (.8,-.3) .. (.8,0);
\end{tikzpicture}
};
\endxy
\;+\;
\xy
(0,0)*{
\begin{tikzpicture}[scale=1]
\draw [very thick] (0,0).. controls (0,-.8) and (.85,-.6) .. (.85,-.3) .. controls (.85,-.2) and (.8,-.1) .. (.8,0);
\draw [very thick] (1.8,0) .. controls (1.8,-.8) and (.95,-.6) .. (.95,-.3) .. controls (.95,-.2) and (1,-.1) .. (1,0);
\draw [very thick] (0,0).. controls (0,.8) and (.8,.8) .. (.8,.5) .. controls (.8,.3) and (1,.3) .. (1,.5) .. controls (1,.8) and (1.8,.8) .. (1.8,0);
\draw [very thick] (1,0) .. controls (1,.3) and (.8,.3) .. (.8,0);
\end{tikzpicture}
};
\endxy
\;+\;
A^{-2}\;
\xy
(0,0)*{
\begin{tikzpicture}[scale=1]
\draw [very thick] (0,0).. controls (0,-.8) and (.85,-.6) .. (.85,-.3) .. controls (.85,-.2) and (.8,-.1) .. (.8,0);
\draw [very thick] (1.8,0) .. controls (1.8,-.8) and (.95,-.6) .. (.95,-.3) .. controls (.95,-.2) and (1,-.1) .. (1,0);
\draw [very thick] (0,0).. controls (0,.8) and (.85,.6) .. (.85,.3) .. controls (.85,.2) and (.8,.1) .. (.8,0);
\draw [very thick] (1.8,0) .. controls (1.8,.8) and (.95,.6) .. (.95,.3) .. controls (.95,.2) and (1,.1) .. (1,0);
\end{tikzpicture}
};
\endxy \nonumber
\end{eqnarray}

Next, we evaluate the circles according to rule \ref{KauffBrCircle}.

\begin{eqnarray}
\xy
(0,0)*{
\begin{tikzpicture}[scale=1]
\draw [very thick] (0,0) arc (-180:0:.5);
\draw [very thick] (.8,0) arc (180:0:.5);
\draw [thick, white, double, double=black, double distance=1.25pt] (0,0) arc (180:0:.5);
\draw [thick, white, double, double=black, double distance=1.25pt] (.8,0) arc (-180:0:.5);
\end{tikzpicture}
};
\endxy
&\mapsto&
A^{2}(-A^2-A^{-2})^2+(-A^{2}-A^{-2})+(-A^{2}-A^{-2})+A^{-2}(-A^2-A^{-2})^2\nonumber \\
& &=A^6+A^2+A^{-2}+A^{-6}. \nonumber
\end{eqnarray}

\end{example}

% -------------------------------------------------------------------------------
%
\subsection{Passing to surfaces}
%
% -------------------------------------------------------------------------------

The previous computational rules are entirely local, so it should be possible to apply them to knot diagrams drawn on more general surfaces. A knot diagram in the plane is a projection of a knot in $\R^3$ or $\R^2\times \I$ (where $\I=[0,1]$). A knot diagram on any surface $S$, then, should be a projection of a knot in $S\times \I$ which we sometimes refer to as {\it the thickening of $S$}.

\begin{definition}
Let $S$ be a surface. A knot in the thickening of $S$ is the embedding of a circle $\Ss^1$ in $S\times \I$ considered up to isotopy of the whole $3$-manifold $S\times \I$. Similarly, a link in $S\times \I$ is the embedding of a disjoint union $\sqcup_k\Ss^1$ in $S\times \I$. A framed knot (or link) is a knot (or link) with an orthogonal non-vanishing vector field, which can be thought to as the embedding of a band $\Ss^1\times \I$ (or copies of such bands).
\end{definition}

As in the planar case, all diagrams of a given knot on $S$ are related by Reidemeister moves and isotopies of $S$. In the framed case, just as usual, a diagram can correspond to a framed knot by adopting the blackboard framing rule, in which case two diagrams are equivalent if related through Reidemeister moves II and III, isotopies, and the framed version of the first Reidemeister move.

\begin{eqnarray}
\xy
(0,0)*{
\begin{tikzpicture}[scale=0.3]
\draw[dashed] (5.7,-3.5) arc(0:360:4);
\draw[style=very thick, -] (0,0) -- (0,-2);
\draw [style=very thick, -](0,-2) arc (180:360: 1cm);
\draw [style=very thick, -](2,-2) arc (0:160: .9cm);
\draw[style=very thick, -] (0.1,-2.7) -- (0.1,-4.3);
\draw [style=very thick, -](2,-5) arc (0:-150: 0.9cm);
\draw [style=very thick, -](2,-5) arc (0:180: 1cm);
\draw[style=very thick, -] (0,-5) -- (0,-7);
\end{tikzpicture}
};
\endxy
\;\;\underleftrightarrow{\;R'_I\;}\;\;
\xy
(0,0)*{
\begin{tikzpicture}[scale=0.3]
\draw[dashed] (5.7,-3.5) arc(0:360:4);
\draw[style=very thick, -] (0,0) .. controls (2,-3) and (2,-4) .. (0,-7);
\end{tikzpicture}
};
\endxy
\label{Framed_R1}
\end{eqnarray}

The Kauffman bracket also extends to our new setting as a framed invariant. Below we calculate an example on the torus. Note that the small circle represents a homologically trivial circle in the thickened torus.

\begin{example}
We start from a diagram similar to the one from Example \ref{ex:KauffBracket}, but now considered on a torus. We once again begin by smoothing all crossings according to rule \ref{KauffBrSmoothing}.

\begin{eqnarray}
\xy
(0,0)*{
  \begin{tikzpicture} [scale=.4,fill opacity=0.2,  decoration={markings, 
                        mark=at position 0.6 with {\arrow{>}};    }]
\draw (-4,0) .. controls (-4,1.2) and (-2,2) .. (0,2) .. controls (2,2) and (4,1.2) .. (4,0) .. controls (4,-1.2) and (2,-2) .. (0,-2) .. controls (-2,-2) and (-4,-1.2) .. (-4,0);
     \draw (-2,0.2) arc (-120:-60:4);
     \draw (1.36,-0.08) arc (70:110:4);
\draw [very thick] (0,-.6) arc (90:-90:.5);
\draw [very thick] (0,-1.1) .. controls (-3.5,-1.1) and (-3.5,1.1) .. (0,1.1);
\draw [very thick, white, double, double=black, double distance=1.25pt] (0,-1.1) .. controls (3.5,-1.1) and (3.5,1.1) .. (0,1.1);
\draw [very thick, white, double, double=black, double distance=1.25pt] (0,-.6) arc (90:270:.5);
    \end{tikzpicture}};
\endxy
\quad &\mapsto& \quad
A\;
\xy
(0,0)*{
  \begin{tikzpicture} [scale=.4,fill opacity=0.2,  decoration={markings, 
                        mark=at position 0.6 with {\arrow{>}};    }]
\draw (-4,0) .. controls (-4,1.2) and (-2,2) .. (0,2) .. controls (2,2) and (4,1.2) .. (4,0) .. controls (4,-1.2) and (2,-2) .. (0,-2) .. controls (-2,-2) and (-4,-1.2) .. (-4,0);
     \draw (-2,0.2) arc (-120:-60:4);
     \draw (1.36,-0.08) arc (70:110:4);
\draw [very thick] (0,-.6) arc (90:-90:.5);
\draw [very thick] (0,-.6) .. controls (-.5,-.6) and (-.5,-1.1) .. (0,-1.1);
%\draw [very thick] (0,-1.1) .. controls (-3.5,-1.1) and (-3.5,1.1) .. (0,1.1);
\draw [very thick] (0,-1.6) .. controls (-.4,-1.6) and (-.3,-1.1) ..(-.6,-1.1) .. controls (-3.5,-1.1) and (-3.5,1.1) .. (0,1.1);
\draw [very thick, white, double, double=black, double distance=1.25pt] (0,-1.1) .. controls (3.5,-1.1) and (3.5,1.1) .. (0,1.1);
%\draw [very thick, white, double, double=black, double distance=1.25pt] (0,-.6) arc (90:270:.5);
    \end{tikzpicture}};
\endxy
\;+
A^{-1}\;
\xy
(0,0)*{
  \begin{tikzpicture} [scale=.4,fill opacity=0.2,  decoration={markings, 
                        mark=at position 0.6 with {\arrow{>}};    }]
\draw (-4,0) .. controls (-4,1.2) and (-2,2) .. (0,2) .. controls (2,2) and (4,1.2) .. (4,0) .. controls (4,-1.2) and (2,-2) .. (0,-2) .. controls (-2,-2) and (-4,-1.2) .. (-4,0);
     \draw (-2,0.2) arc (-120:-60:4);
     \draw (1.36,-0.08) arc (70:110:4);
\draw [very thick] (0,-.6) arc (90:-90:.5);
\draw [very thick] (0,-1.6) .. controls (-.5,-1.6) and (-.5,-1.1) .. (0,-1.1);
\draw [very thick] (0,-.6) .. controls (-.4,-.6) and (-.3,-1.1) ..(-.6,-1.1) .. controls (-3.5,-1.1) and (-3.5,1.1) .. (0,1.1);
\draw [very thick, white, double, double=black, double distance=1.25pt] (0,-1.1) .. controls (3.5,-1.1) and (3.5,1.1) .. (0,1.1);
    \end{tikzpicture}};
\endxy\nonumber \\
&\mapsto&
A^2\;
\xy
(0,0)*{
  \begin{tikzpicture} [scale=.4,fill opacity=0.2,  decoration={markings, 
                        mark=at position 0.6 with {\arrow{>}};    }]
\draw (-4,0) .. controls (-4,1.2) and (-2,2) .. (0,2) .. controls (2,2) and (4,1.2) .. (4,0) .. controls (4,-1.2) and (2,-2) .. (0,-2) .. controls (-2,-2) and (-4,-1.2) .. (-4,0);
     \draw (-2,0.2) arc (-120:-60:4);
     \draw (1.36,-0.08) arc (70:110:4);
%% Knot
\draw [very thick] (0,-.6) .. controls (-.5,-.6) and (-.5,-1.1) .. (0,-1.1);
\draw [very thick] (0,-.6) .. controls (.5,-.6) and (.5,-1.1) .. (0,-1.1);
\draw [very thick] (0,-1.6) .. controls (-.4,-1.6) and (-.3,-1.1) ..(-.6,-1.1) .. controls (-3.5,-1.1) and (-3.5,1.1) .. (0,1.1);
\draw [very thick] (0,-1.6) .. controls (.4,-1.6) and (.3,-1.1) ..(.6,-1.1) .. controls (3.5,-1.1) and (3.5,1.1) .. (0,1.1);
    \end{tikzpicture}};
\endxy
\;+\;
\xy
(0,0)*{
  \begin{tikzpicture} [scale=.4,fill opacity=0.2,  decoration={markings, 
                        mark=at position 0.6 with {\arrow{>}};    }]
\draw (-4,0) .. controls (-4,1.2) and (-2,2) .. (0,2) .. controls (2,2) and (4,1.2) .. (4,0) .. controls (4,-1.2) and (2,-2) .. (0,-2) .. controls (-2,-2) and (-4,-1.2) .. (-4,0);
     \draw (-2,0.2) arc (-120:-60:4);
     \draw (1.36,-0.08) arc (70:110:4);
%% Knot
\draw [very thick] (0,-.6) .. controls (-.5,-.6) and (-.5,-1.1) .. (0,-1.1);
\draw [very thick] (0,-1.1) .. controls (.5,-1.1) and (.5,-1.6) .. (0,-1.6);
\draw [very thick] (0,-1.6) .. controls (-.4,-1.6) and (-.3,-1.1) ..(-.6,-1.1) .. controls (-3.5,-1.1) and (-3.5,1.1) .. (0,1.1);
\draw [very thick] (0,-.6) .. controls (.4,-.6) and (.3,-1.1) ..(.6,-1.1) .. controls (3.5,-1.1) and (3.5,1.1) .. (0,1.1);
    \end{tikzpicture}};
\endxy \nonumber \\
&+&
\xy
(0,0)*{
  \begin{tikzpicture} [scale=.4,fill opacity=0.2,  decoration={markings, 
                        mark=at position 0.6 with {\arrow{>}};    }]
\draw (-4,0) .. controls (-4,1.2) and (-2,2) .. (0,2) .. controls (2,2) and (4,1.2) .. (4,0) .. controls (4,-1.2) and (2,-2) .. (0,-2) .. controls (-2,-2) and (-4,-1.2) .. (-4,0);
     \draw (-2,0.2) arc (-120:-60:4);
     \draw (1.36,-0.08) arc (70:110:4);
%% Knot
\draw [very thick] (0,-.6) .. controls (.5,-.6) and (.5,-1.1) .. (0,-1.1);
\draw [very thick] (0,-1.1) .. controls (-.5,-1.1) and (-.5,-1.6) .. (0,-1.6);
\draw [very thick] (0,-1.6) .. controls (.4,-1.6) and (.3,-1.1) ..(.6,-1.1) .. controls (3.5,-1.1) and (3.5,1.1) .. (0,1.1);
\draw [very thick] (0,-.6) .. controls (-.4,-.6) and (-.3,-1.1) ..(-.6,-1.1) .. controls (-3.5,-1.1) and (-3.5,1.1) .. (0,1.1);
    \end{tikzpicture}};
\endxy
\;+A^{-2}\;
\xy
(0,0)*{
  \begin{tikzpicture} [scale=.4,fill opacity=0.2,  decoration={markings, 
                        mark=at position 0.6 with {\arrow{>}};    }]
\draw (-4,0) .. controls (-4,1.2) and (-2,2) .. (0,2) .. controls (2,2) and (4,1.2) .. (4,0) .. controls (4,-1.2) and (2,-2) .. (0,-2) .. controls (-2,-2) and (-4,-1.2) .. (-4,0);
     \draw (-2,0.2) arc (-120:-60:4);
     \draw (1.36,-0.08) arc (70:110:4);
%% Knot
\draw [very thick] (0,-1.6) .. controls (.5,-1.6) and (.5,-1.1) .. (0,-1.1);
\draw [very thick] (0,-1.6) .. controls (-.5,-1.6) and (-.5,-1.1) .. (0,-1.1);
\draw [very thick] (0,-.6) .. controls (.4,-.6) and (.3,-1.1) ..(.6,-1.1) .. controls (3.5,-1.1) and (3.5,1.1) .. (0,1.1);
\draw [very thick] (0,-.6) .. controls (-.4,-.6) and (-.3,-1.1) ..(-.6,-1.1) .. controls (-3.5,-1.1) and (-3.5,1.1) .. (0,1.1);
    \end{tikzpicture}};
\endxy \nonumber
\end{eqnarray}

Then we remove all possible null-homotopic circles using rule \ref{KauffBrCircle}.

\begin{eqnarray}
\xy
(0,0)*{
  \begin{tikzpicture} [scale=.4,fill opacity=0.2,  decoration={markings, 
                        mark=at position 0.6 with {\arrow{>}};    }]
\draw (-4,0) .. controls (-4,1.2) and (-2,2) .. (0,2) .. controls (2,2) and (4,1.2) .. (4,0) .. controls (4,-1.2) and (2,-2) .. (0,-2) .. controls (-2,-2) and (-4,-1.2) .. (-4,0);
     \draw (-2,0.2) arc (-120:-60:4);
     \draw (1.36,-0.08) arc (70:110:4);
\draw [very thick] (0,-.6) arc (90:-90:.5);
\draw [very thick] (0,-1.1) .. controls (-3.5,-1.1) and (-3.5,1.1) .. (0,1.1);
\draw [very thick, white, double, double=black, double distance=1.25pt] (0,-1.1) .. controls (3.5,-1.1) and (3.5,1.1) .. (0,1.1);
\draw [very thick, white, double, double=black, double distance=1.25pt] (0,-.6) arc (90:270:.5);
    \end{tikzpicture}};
\endxy
&\mapsto&
\text{{\small $(A^{2}(-A^{2}-A^{-2})+1+1+A^{-2}((-A^{2}-A^{-2}))$}}\;
\xy
(0,0)*{
  \begin{tikzpicture} [scale=.4,fill opacity=0.2,  decoration={markings, 
                        mark=at position 0.6 with {\arrow{>}};    }]
\draw (-4,0) .. controls (-4,1.2) and (-2,2) .. (0,2) .. controls (2,2) and (4,1.2) .. (4,0) .. controls (4,-1.2) and (2,-2) .. (0,-2) .. controls (-2,-2) and (-4,-1.2) .. (-4,0);
     \draw (-2,0.2) arc (-120:-60:4);
     \draw (1.36,-0.08) arc (70:110:4);
%% Knot
\draw [very thick] (0,-1.1) .. controls (3.5,-1.1) and (3.5,1.1) .. (0,1.1);
\draw [very thick] (0,-1.1) .. controls (-3.5,-1.1) and (-3.5,1.1) .. (0,1.1);
%\draw [very thick] (0,-.6) arc (90:-90:.5);
    \end{tikzpicture}};
\endxy \nonumber \\
&&=
\text{{\small $(-A^{4}-A^{-4})$}}\;
\xy
(0,0)*{
  \begin{tikzpicture} [scale=.4,fill opacity=0.2,  decoration={markings, 
                        mark=at position 0.6 with {\arrow{>}};    }]
\draw (-4,0) .. controls (-4,1.2) and (-2,2) .. (0,2) .. controls (2,2) and (4,1.2) .. (4,0) .. controls (4,-1.2) and (2,-2) .. (0,-2) .. controls (-2,-2) and (-4,-1.2) .. (-4,0);
     \draw (-2,0.2) arc (-120:-60:4);
     \draw (1.36,-0.08) arc (70:110:4);
%% Knot
\draw [very thick] (0,-1.1) .. controls (3.5,-1.1) and (3.5,1.1) .. (0,1.1);
\draw [very thick] (0,-1.1) .. controls (-3.5,-1.1) and (-3.5,1.1) .. (0,1.1);
%\draw [very thick] (0,-.6) arc (90:-90:.5);
    \end{tikzpicture}};
\endxy \nonumber
\end{eqnarray}
\end{example}

As this example shows, the Kauffman bracket for general surfaces is more complicated than in the planar case. Indeed, the Kauffman bracket is no longer necessarily an element of $R$ (multiplied by the empty curve) but instead an $R$-linear combination of homotopically nontrivial curves on $S$.  This observation is the key idea leading to the definition of the \emph{Kauffman bracket skein module} (see for instance \cite{Prz91} and the survey~\cite{Prz_skein}).

\begin{definition}
Let $S$ be a surface. The \emph{Kauffman bracket skein module} of $S$, denoted $Sk(S)$ is the $R$-module generated by unoriented, closed, embedded curves on $S$ up to isotopy subject to the local relation in rule \ref{KauffBrCircle}. (We emphasize that only homotopically trivial circles (i.e.  circles bounding a disk) can be deleted using rule \ref{KauffBrCircle}.)
\end{definition}

This module is a natural context for studying the Kauffman bracket of knots in thickened surfaces. The Kauffman bracket of a diagram on $S$ is a skein element in $Sk(S)$, and moreover, each framed isotopy class of links in $S\times\I$ has a unique skein element associated to it. In other words, the Kauffman bracket is an invariant of framed links.

% -------------------------------------------------------------------------------
%
\subsection{The algebra structure}
%
% -------------------------------------------------------------------------------

The smoothing process that defines the Kauffman bracket also endows $Sk(S)$ with an algebra structure: the product of curves $\alpha$ and $\beta$ is the skein element in $Sk(S)$ corresponding to any non-singular superposition of $\alpha$ over $\beta$. The following definition makes this precise.

\begin{definition}
Let $\alpha, \beta\in Sk(S)$ be generators corresponding to embeddings $\alpha: \sqcup_k \Ss^1 \rightarrow S$ and $\beta: \sqcup_l \Ss^1 \rightarrow S$. Define $\alpha \ast \beta$ to be the element of $Sk(S)$ corresponding to the framed curve in $S\times I$ described by the following mapping
\[
\sqcup_{k+l}\Ss^1\mapsto S\times \I,
\]
where $x\mapsto (\alpha(x), 1)$ if $x$ is in one of the first $k$ copies of $\Ss^1$ and $x\mapsto (\beta(x), 0)$ otherwise. This product extends linearly.

\end{definition}

As an example, we calculate the product of two curves on the torus.

\begin{example}

\begin{equation}
\xy (0,0)*{
  \begin{tikzpicture} [scale=.5,fill opacity=0.2,  decoration={markings, 
                        mark=at position 0.6 with {\arrow{>}};    }]
     \draw [domain=-180:180, smooth, tension=1] plot ({2*cos(\x)},{sin(\x )});
     \draw (-1,0.1) arc (-120:-60:2) -- (0.68,-0.04) arc (70:110:2) -- cycle;
     \draw [very thick, domain=-180:180, smooth, tension=1] plot ({1.5*cos(\x)},{.7*sin(\x )});
    \end{tikzpicture}};
\endxy
\; \ast \;
\xy (0,0)*{
  \begin{tikzpicture} [scale=.5,fill opacity=0.2,  decoration={markings, 
                        mark=at position 0.6 with {\arrow{>}};    }]
     \draw [domain=-180:180, smooth, tension=1] plot ({2*cos(\x)},{sin(\x )});
     \draw (-1,0.1) arc (-120:-60:2) -- (0.68,-0.04) arc (70:110:2) -- cycle;
     \draw [very thick] (0,-1) .. controls (.4,-1) and (.4,-.16) .. (0,-.16);
     \draw [dotted] (0,-1) .. controls (-.3,-1) and (-.3,-.16) .. (0,-.16);
     %.. controls (0.1,-1) and (.3,-.85) .. (.4,-.7) -- (.4,-.7).. controls (.5,-.5) and (1.7,-.3) .. (1.7,0) -- (1.7,0).. controls (1.7,.8) and (-1.7,.8) .. (-1.7,0) -- (-1.7,0) .. controls (-1.7,-.3)  and (-1.2,-.5) .. (-1,-.5) .. controls (-.6,-.5) and (-.15,-.16) .. (-.1,-.16);
%     \draw [dotted] (-.1,-.16) .. controls (-.05,-.16) and (-.05,-1) .. (0,-1);
    \end{tikzpicture}};
\endxy
\; =\;
\xy (0,0)*{
  \begin{tikzpicture} [scale=.5,fill opacity=0.2,  decoration={markings, 
                        mark=at position 0.6 with {\arrow{>}};    }]
     \draw [domain=-180:180, smooth, tension=1] plot ({2*cos(\x)},{sin(\x )});
     \draw (-1,0.1) arc (-120:-60:2) -- (0.68,-0.04) arc (70:110:2) -- cycle;
     \draw [very thick] (0,-1) .. controls (.4,-1) and (.4,-.16) .. (0,-.16);
     \draw [dotted] (0,-1) .. controls (-.3,-1) and (-.3,-.16) .. (0,-.16);
      \draw [ultra thick, double, color= white, double=black,double distance=1.25pt, domain=-180:180, smooth, tension=1] plot ({1.5*cos(\x)},{.7*sin(\x )});
     %.. controls (0.1,-1) and (.3,-.85) .. (.4,-.7) -- (.4,-.7).. controls (.5,-.5) and (1.7,-.3) .. (1.7,0) -- (1.7,0).. controls (1.7,.8) and (-1.7,.8) .. (-1.7,0) -- (-1.7,0) .. controls (-1.7,-.3)  and (-1.2,-.5) .. (-1,-.5) .. controls (-.6,-.5) and (-.15,-.16) .. (-.1,-.16);
%     \draw [dotted] (-.1,-.16) .. controls (-.05,-.16) and (-.05,-1) .. (0,-1);
    \end{tikzpicture}};
\endxy
\;= A\;
\xy (0,0)*{
  \begin{tikzpicture} [scale=.5,fill opacity=0.2,  decoration={markings, 
                        mark=at position 0.6 with {\arrow{>}};    }]
     \draw [domain=-180:180, smooth, tension=1] plot ({2*cos(\x)},{sin(\x )});
     \draw (-1,0.1) arc (-120:-60:2) -- (0.68,-0.04) arc (70:110:2) -- cycle;
      \draw [very thick, domain=-60:240, smooth, tension=1] plot ({1.5*cos(\x)},{.7*sin(\x )});
     \draw [very thick] (0.75,-0.6) .. controls (.5,-.7) and (.4,-1) .. (.2,-1);
     \draw [very thick] (-0.75,-0.6) .. controls (-0.5,-.7) and (-.2,-.1) .. (0,-.16);
    \draw [dotted] (.2,-1) .. controls (0,-1) and (.2,-.16) .. (0,-.16);
    \end{tikzpicture}};
\endxy
\; + A^{-1} \;
\xy (0,0)*{
  \begin{tikzpicture} [scale=.5,fill opacity=0.2,  decoration={markings, 
                        mark=at position 0.6 with {\arrow{>}};    }]
     \draw [domain=-180:180, smooth, tension=1] plot ({2*cos(\x)},{sin(\x )});
     \draw (-1,0.1) arc (-120:-60:2) -- (0.68,-0.04) arc (70:110:2) -- cycle;
      \draw [very thick, domain=-60:240, smooth, tension=1] plot ({1.5*cos(\x)},{.7*sin(\x )});
     \draw [very thick] (0.75,-0.6) .. controls (0.5,-.7) and (.2,-.1) .. (0,-.16);
     \draw [very thick] (-0.75,-0.6) .. controls (-0.5,-.7) and (-.4,-1) .. (-.2,-1);
    \draw [dotted] (-.2,-1) .. controls (0,-1) and (-.2,-.16) .. (0,-.16);
    \end{tikzpicture}};
\endxy
\label{eq:Product11}
\end{equation}

\end{example}

Consider $Sk(\R^2)$ endowed with this product. When performing a multiplication, the top element can always be pushed away so that it does not lie over the bottom one. This means that the product in this case is just disjoint union of curves. Note also that if one multiplies two copies of the same curve (on an orientable surface), we can realize it as a disjoint union.

% -------------------------------------------------------------------------------
%
\subsection{Conventions for toric curves}
%
% -------------------------------------------------------------------------------

For the remainder of our discussion, we are interested in the case where $S$ is the torus.  We fix an oriented longitude $\lambda$ and meridian $\mu$ as well as an orientation on $\tor$ so that, on a neighborhood of the intersection between $\lambda$ and $\mu$, their germs in this order form a negative basis of the tangent space. This is summarized in Figure \ref{torusconventions}. 
\begin{figure}[h]
\[
\xy
(0,0)*{
\begin{tikzpicture} [scale=.5,  decoration={markings, 
                        mark=at position 0.6 with {\arrow{>}};    }]
%% Torus
\draw (-4,0) .. controls (-4,1.2) and (-2,2) .. (0,2) .. controls (2,2) and (4,1.2) .. (4,0) .. controls (4,-1.2) and (2,-2) .. (0,-2) .. controls (-2,-2) and (-4,-1.2) .. (-4,0);
     \draw (-2,0.2) arc (-120:-60:4);
     \draw (1.36,-0.08) arc (70:110:4);
%% Curve : lambda
\draw [very thick, postaction={decorate}] (0,-1.1) .. controls (3.5,-1.1) and (3.5,1.1) .. (0,1.1);
\draw [very thick] (0,-1.1) .. controls (-3.5,-1.1) and (-3.5,1.1) .. (0,1.1);
\node at (2,-1.2) {\tiny $\lambda$};
%% Curve : mu
\draw [very thick, postaction={decorate}] (.5,-1.05) .. controls (.5,-1.5) and (.2,-2) .. (0,-2);
\draw [thick, dotted] (0,-2) .. controls (-.5,-2) and (-.5,-.35) .. (0,-.35);
\draw [very thick] (0,-.35) .. controls (.2,-.35) and (.5,-.6) .. (.5,-1.05);
\node at (0,-2.4) {\tiny $\mu$};
%% Orientation
\node at (3.2,-.5) {$\circlearrowleft$};
\end{tikzpicture}
};
\endxy
\]
\caption{Conventions for $\tor$}\label{torusconventions}
\end{figure}

Up to isotopy, the connected embedded curves on $\tor$ are classified by pairs $(p,q)$ with $GCD(p,q)=1$ and $(p,q)\in \N^*\times \Z \cup \{0\}\times \N^*$. A pair $(p,q)$ corresponds to the only connected, embedded curve in $\tor$ that can be oriented such that its homology is the same as $p\lambda+q\mu$. Figure \ref{torcurveex} has an example. 

\begin{figure}[h]
\[
\xy (0,0)*{
  \begin{tikzpicture} [scale=1,fill opacity=0.2,  decoration={markings, 
                        mark=at position 0.6 with {\arrow{>}};    }]
%% Torus
\draw (-2,0) .. controls (-2,.6) and (-1,1) .. (0,1) .. controls (1,1) and (2,.6) .. (2,0) .. controls (2,-.6) and (1,-1) .. (0,-1) .. controls (-1,-1) and (-2,-.6) .. (-2,0);
     \draw (-1,0.1) arc (-120:-60:2);
     \draw (.68,-0.04) arc (70:110:2);
     \draw [very thick] (0,-1) .. controls (0.1,-1) and (.3,-.85) .. (.4,-.7) -- (.4,-.7).. controls (.5,-.5) and (1.7,-.3) .. (1.7,0) -- (1.7,0).. controls (1.7,.8) and (-1.7,.8) .. (-1.7,0) -- (-1.7,0) .. controls (-1.7,-.3)  and (-1.2,-.5) .. (-1,-.5) .. controls (-.6,-.5) and (-.15,-.16) .. (-.1,-.16);
     \draw [dotted] (-.1,-.16) .. controls (-.05,-.16) and (-.05,-1) .. (0,-1);
    \end{tikzpicture}};
\endxy
\]
\caption{A curve of type $(1,-1)$}\label{torcurveex}
\end{figure}
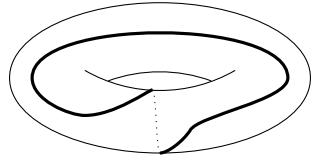

Denote by $(np,nq)$ the multicurve formed by $n$ parallel copies of the $(p,q)$ curve. Then the following set together with the empty curve forms a basis for $Sk(\tor)$.

$$\{(np,nq)\;|\;GCD(p,q)=1,\; (p,q)\in(\Z^{+*}\times \Z) \cup (\{0\}\times \Z^{+*}),\; n\in \Z^{+*}\}$$

Note that the number of intersections between two multicurves corresponding to pairs $(np,nq)$ and $(mr,ms)$ placed in generic position is given by the absolute value of the following determinant:
\[
\begin{vmatrix}
np & mr \\
nq & ms
\end{vmatrix}
= nm\begin{vmatrix}
p & r \\
q & s
\end{vmatrix}.
\]

When doing calculations involving multiplication of multicurves, we will always assume they are in generic position. Thus the number of intersections is described by the above formula, the top curve is consistently above the bottom curve in all crossings, and there are no crossings between a curve and itself.

% ===============================================================================
%
\section{Frohman-Gelca formula}
%
% ===============================================================================

The multiplicative structure on the Kauffman Bracket skein algebra was somewhat mysterious when first introduced. As a way to better understand the torus case, Frohman and Gelca characterize multiplication in $Sk(\tor)$ via an isomorphism with the noncommutative torus (see also Sallenave's independent work \cite{Sal1} for a different approach to the same of the same question). As part of this work, they define an alternative basis for $Sk(\tor)$ using Chebyshev polynomials of the first kind. Multiplication on this basis can be described by a simple product-to-sum formula.

\emph{Chebyshev polynomials of the first kind} are defined recursively by: $T_0=2$, $T_1=X$, $T_n=X\cdot T_{n-1}-T_{n-2}$.  For example, then, $T_2(X) = X^2-2$. The more common \emph{Chebyshev polynomials of the second kind}, which show up when defining Jones-Wenzl projectors, differ only from the first kind in that $T_0=1$.

Using the algebra structure, we can evaluate the polynomials $T_n$ on elements of $Sk(\tor)$. Indeed, given a simple closed curve $y\in Sk(\tor)$ and $k\in \N$, the expression $y^k$ represents a superposition of $k$ copies of the curve $y$. By interpreting constant terms as scalar multiples of the empty curve, we see that $T_n(y)\in Sk(\tor)$ for all $n$. Let $(p,q)$ be relatively prime integers representing a toric curve. Define $(np,nq)_T=T_n((p,q)) $. More generally, let $(a,b)_T$ be given by the following formula.
\[(a,b)_T=T_{GCD(a,b)}\left( \left(\frac{a}{GCD(a,b)},\frac{b}{GCD(a,b)}\right) \right) \]

\begin{example}

\begin{eqnarray}
 \quad
\left(
\xy (0,0)*{
  \begin{tikzpicture} [scale=.7,fill opacity=0.2,  decoration={markings, 
                        mark=at position 0.6 with {\arrow{>}};    }]
%% Torus
\draw (-2,0) .. controls (-2,.6) and (-1,1) .. (0,1) .. controls (1,1) and (2,.6) .. (2,0) .. controls (2,-.6) and (1,-1) .. (0,-1) .. controls (-1,-1) and (-2,-.6) .. (-2,0);
\draw (-1,0.1) arc (-120:-60:2);
\draw (.68,-0.04) arc (70:110:2);
     \draw [very thick] (0,-1) .. controls (0.1,-1) and (.3,-.85) .. (.4,-.7) -- (.4,-.7).. controls (.5,-.5) and (1.7,-.3) .. (1.7,0) -- (1.7,0).. controls (1.7,.8) and (-1.7,.8) .. (-1.7,0) -- (-1.7,0) .. controls (-1.7,-.3)  and (-1.2,-.5) .. (-1,-.5) .. controls (-.6,-.5) and (-.15,-.16) .. (-.1,-.16);
     \draw [dotted] (-.1,-.16) .. controls (-.05,-.16) and (-.05,-1) .. (0,-1);
%% Parallel curve
     \draw [very thick] (.2,-1) .. controls (0.3,-1) and (.5,-.85) .. (.6,-.7) -- (.6,-.7).. controls (.7,-.5) and (1.9,-.3) .. (1.9,0) -- (1.9,0).. controls (1.9,1) and (-1.9,1) .. (-1.9,0) -- (-1.9,0) .. controls (-1.9,-.5)  and (-1.4,-.6) .. (-1.2,-.6) .. controls (-.4,-.7) and (.05,-.16) .. (.1,-.16);
     \draw [dotted] (.1,-.16) .. controls (.15,-.16) and (.15,-1) .. (.2,-1);
    \end{tikzpicture}};
\endxy
\right)_T
=
(2,-2)_T &=& T_2((1,-1))= (1,-1)^2-2 \nonumber \\
&=& 
\left(\;
\xy (0,0)*{
  \begin{tikzpicture} [scale=.7,fill opacity=0.2,  decoration={markings, 
                        mark=at position 0.6 with {\arrow{>}};    }]
%% Torus
\draw (-2,0) .. controls (-2,.6) and (-1,1) .. (0,1) .. controls (1,1) and (2,.6) .. (2,0) .. controls (2,-.6) and (1,-1) .. (0,-1) .. controls (-1,-1) and (-2,-.6) .. (-2,0);
\draw (-1,0.1) arc (-120:-60:2);
\draw (.68,-0.04) arc (70:110:2);
     \draw [very thick] (0,-1) .. controls (0.1,-1) and (.3,-.85) .. (.4,-.7) -- (.4,-.7).. controls (.5,-.5) and (1.7,-.3) .. (1.7,0) -- (1.7,0).. controls (1.7,.8) and (-1.7,.8) .. (-1.7,0) -- (-1.7,0) .. controls (-1.7,-.3)  and (-1.2,-.5) .. (-1,-.5) .. controls (-.6,-.5) and (-.15,-.16) .. (-.1,-.16);
     \draw [dotted] (-.1,-.16) .. controls (-.05,-.16) and (-.05,-1) .. (0,-1);
    \end{tikzpicture}};
\endxy
\; \right)^2
\quad - \quad
2\;\;
\xy (0,0)*{
  \begin{tikzpicture} [scale=.7,fill opacity=0.2,  decoration={markings, 
                        mark=at position 0.6 with {\arrow{>}};    }]
%% Torus
\draw (-2,0) .. controls (-2,.6) and (-1,1) .. (0,1) .. controls (1,1) and (2,.6) .. (2,0) .. controls (2,-.6) and (1,-1) .. (0,-1) .. controls (-1,-1) and (-2,-.6) .. (-2,0);
\draw (-1,0.1) arc (-120:-60:2);
\draw (.68,-0.04) arc (70:110:2);
    \end{tikzpicture}};
\endxy
\nonumber \\
&=&
\xy (0,0)*{
  \begin{tikzpicture} [scale=.7,fill opacity=0.2,  decoration={markings, 
                        mark=at position 0.6 with {\arrow{>}};    }]
%% Torus
\draw (-2,0) .. controls (-2,.6) and (-1,1) .. (0,1) .. controls (1,1) and (2,.6) .. (2,0) .. controls (2,-.6) and (1,-1) .. (0,-1) .. controls (-1,-1) and (-2,-.6) .. (-2,0);
\draw (-1,0.1) arc (-120:-60:2);
\draw (.68,-0.04) arc (70:110:2);
     \draw [very thick] (0,-1) .. controls (0.1,-1) and (.3,-.85) .. (.4,-.7) -- (.4,-.7).. controls (.5,-.5) and (1.7,-.3) .. (1.7,0) -- (1.7,0).. controls (1.7,.8) and (-1.7,.8) .. (-1.7,0) -- (-1.7,0) .. controls (-1.7,-.3)  and (-1.2,-.5) .. (-1,-.5) .. controls (-.6,-.5) and (-.15,-.16) .. (-.1,-.16);
     \draw [dotted] (-.1,-.16) .. controls (-.05,-.16) and (-.05,-1) .. (0,-1);
%% Parallel curve
     \draw [very thick] (.2,-1) .. controls (0.3,-1) and (.5,-.85) .. (.6,-.7) -- (.6,-.7).. controls (.7,-.5) and (1.9,-.3) .. (1.9,0) -- (1.9,0).. controls (1.9,1) and (-1.9,1) .. (-1.9,0) -- (-1.9,0) .. controls (-1.9,-.5)  and (-1.4,-.6) .. (-1.2,-.6) .. controls (-.4,-.7) and (.05,-.16) .. (.1,-.16);
     \draw [dotted] (.1,-.16) .. controls (.15,-.16) and (.15,-1) .. (.2,-1);
    \end{tikzpicture}};
\endxy
\quad - \quad
2\;\;
\xy (0,0)*{
  \begin{tikzpicture} [scale=.7,fill opacity=0.2,  decoration={markings, 
                        mark=at position 0.6 with {\arrow{>}};    }]
%% Torus
\draw (-2,0) .. controls (-2,.6) and (-1,1) .. (0,1) .. controls (1,1) and (2,.6) .. (2,0) .. controls (2,-.6) and (1,-1) .. (0,-1) .. controls (-1,-1) and (-2,-.6) .. (-2,0);
\draw (-1,0.1) arc (-120:-60:2);
\draw (.68,-0.04) arc (70:110:2);
    \end{tikzpicture}};
\endxy
\nonumber 
\end{eqnarray}

\end{example}

\vspace{.5cm}

A central result of Frohman-Gelca is the following theorem \cite{FG}.

\begin{theorem} \label{theorem::FG_Formula}
The elements $(np,nq)_{T}$ together with the empty curve form a basis\footnote{Note that if one wants to keep a basis over $\Z[A,A^{-1}]$, the only homotopically trivial generator should be chosen to be $1$, while the corresponding Chebyshev polynomial is $2$. Then the Frohman-Gelca formula does not strictly give a decomposition for all products, since the multiplication by $1$ is not covered. The definition of it should however be easy to guess! Another approach could be to invert $2$ in the base ring, in order to have an actual basis given by the Chebyshev polynomials.} of $Sk(\tor)$. Furthermore, multiplication on this basis is described by the following \emph{Product-To-Sum} formula:
\begin{equation} \label{FGFormula}
(a,b)_T\ast (c,d)_T=A^{{ }^{\begin{vmatrix}
a & c \\
b & d
\end{vmatrix}}}(a-c,b-d)_T+A^{{}^{-\begin{vmatrix}
a & c \\
b & d
\end{vmatrix}}}(a+c,b+d).
\end{equation}
\end{theorem}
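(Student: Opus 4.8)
The plan is to factor the argument through an auxiliary \emph{oriented} skein module of the torus on which multiplication is given by a \emph{single} term, the passage from this one-term rule back to $Sk(\tor)$ being exactly the Chebyshev change of basis. Alongside $Sk(\tor)$, I would introduce the module $\widetilde{Sk}(\tor)$ generated by oriented multicurves on $\tor$ --- this is the module underlying the Asaeda--Przytycki--Sikora categorification \cite{APS} --- with distinguished elements $\langle a,b\rangle$ indexed by \emph{all} pairs $(a,b)\in\Z^2$: here $\langle a,b\rangle$ is the coherently oriented multicurve consisting of $\gcd(a,b)$ parallel copies of the primitive curve of type $(a/\gcd(a,b),b/\gcd(a,b))$, oriented so that its homology class is $a\lambda+b\mu$, with $\langle 0,0\rangle$ the empty curve. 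The first --- and main --- step is the purely diagrammatic identity
\[
\langle a,b\rangle\ast\langle c,d\rangle \;=\; A^{-\begin{vmatrix} a & c\\ b& d\end{vmatrix}}\,\langle a+c,\,b+d\rangle
\]
in $\widetilde{Sk}(\tor)$, with the conventions of Figure~\ref{torusconventions}. To prove it one puts the two oriented multicurves in minimal position, so that they meet in exactly $|ad-bc|$ transverse double points; since the geometric intersection number here equals the (absolute) algebraic one, all these crossings carry the same sign. Applying rule~\ref{KauffBrSmoothing} at each crossing, the unique state in which \emph{every} smoothing is compatible with the orientations merges the two multicurves into a coherently oriented multicurve of homology class $(a+c)\lambda+(b+d)\mu$ and contributes the stated power of $A$, read off from the common crossing sign; one must then show that all remaining states --- those containing at least one orientation-incompatible smoothing --- are killed by the defining relations of $\widetilde{Sk}(\tor)$. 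This vanishing, together with making $\widetilde{Sk}(\tor)$ and the forgetful map $\widetilde{Sk}(\tor)\to Sk(\tor)$ precise, is where essentially all of the work lies.

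Granting the one-term rule, the rest is formal. One checks that $\sigma\colon\langle a,b\rangle\mapsto\langle -a,-b\rangle$ extends to an involutive algebra automorphism of $\widetilde{Sk}(\tor)$, so its symmetric part, spanned by the elements $e_{a,b}:=\langle a,b\rangle+\langle -a,-b\rangle$ (with $e_{a,b}=e_{-a,-b}$), is a subalgebra. Expanding $e_{a,b}\ast e_{c,d}$ by the one-term rule, the four terms combine in pairs to give
\[
e_{a,b}\ast e_{c,d}\;=\;A^{\begin{vmatrix} a& c\\ b& d\end{vmatrix}}\,e_{a-c,\,b-d}\;+\;A^{-\begin{vmatrix} a& c\\ b& d\end{vmatrix}}\,e_{a+c,\,b+d},
\]
which is precisely the shape of \eqref{FGFormula}. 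It then remains to identify this symmetric subalgebra with $Sk(\tor)$ through the forgetful map, sending $e_{a,b}$ to $(a,b)_T$; as indicated in the footnote to the theorem, this is an algebra isomorphism once $2$ is inverted (with the empty curve corresponding to $\tfrac12 e_{0,0}$), and over $\Z[A,A^{-1}]$ it still yields the asserted basis and the formula for every product in which neither factor is the empty curve.

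The point at which Chebyshev polynomials are unavoidable is this identification, and it is here that their role gets demystified. For $(p,q)$ coprime, $e_{p,q}$ maps to the $(p,q)$ curve, that is to $T_1\bigl((p,q)\bigr)=(p,q)_T$, and $e_{0,0}=2\langle 0,0\rangle$ maps to $2\cdot\emptyset=T_0$. Applying the displayed $e\ast e$ formula in the \emph{degenerate} case where $(a,b)$ and $(c,d)$ are proportional, the determinant vanishes and one obtains
\[
e_{p,q}\ast e_{np,nq}\;=\;e_{(n-1)p,\,(n-1)q}\;+\;e_{(n+1)p,\,(n+1)q},
\]
which is exactly the defining recursion $T_{n+1}=X\,T_n-T_{n-1}$ of the Chebyshev polynomials of the first kind. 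Hence $e_{np,nq}=T_n\bigl(e_{p,q}\bigr)$, so $e_{np,nq}\mapsto T_n\bigl((p,q)\bigr)=(np,nq)_T$. Since $T_n\bigl((p,q)\bigr)$ equals the multicurve $(np,nq)$ plus a $\Z$-combination of multicurves $(kp,kq)$ with $k<n$, the elements $(np,nq)_T$ together with the empty curve are obtained from the multicurve basis of $Sk(\tor)$ by a unitriangular change of basis, hence form a basis; transporting the $e\ast e$ formula through the identification then gives \eqref{FGFormula}.

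The main obstacle is thus Step~1: setting up the oriented skein module rigorously, pinning down its relations and the forgetful map to $Sk(\tor)$, and proving the one-term multiplication rule --- in particular verifying that every orientation-incompatible resolution genuinely vanishes in $\widetilde{Sk}(\tor)$. Everything downstream (the symmetrization, the four-terms-to-two collapse, and the recognition of the Chebyshev recursion) is bookkeeping by comparison.
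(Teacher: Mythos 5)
Your proposal is correct and follows essentially the same route as the paper: an oriented skein module of the torus with a one-term product rule, its orientation-symmetric part identified with $Sk(\tor)$ (the paper's map $\psi$ sending a multicurve to the sum of its orientations, whose compatibility with the product is exactly your ``orientation-incompatible resolutions cancel'' step --- note they cancel in pairs rather than individually), and Chebyshev polynomials entering only as the change of basis. The one cosmetic difference is that you extract the recursion $T_{n+1}=XT_n-T_{n-1}$ from the degenerate case of the product-to-sum formula, while the paper invokes the equivalent identity $T_n(x+x^{-1})=x^n+x^{-n}$ directly.
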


\vspace{1cm}

The purpose this paper is to reprove Formula \ref{FGFormula}. Its original proof~\cite{FG} is based on an iteration, which makes the appearance of Chebyshev polynomials somewhat mysterious. Our work  focuses on highlighting the role of these polynomials by introducing an intermediate algebra $\Aa$ which is the symmetric part of an oriented skein module defined in Section \ref{subsection:Aa}.

% ===============================================================================
%
\section{An oriented skein module} \label{subsection:Aa}
%
% ===============================================================================

The oriented skein module we define here is used to give a new proof of Formula \ref{FGFormula}. Define $\Aa$ to be the $R$-module generated by oriented embedded curves on $\tor$ modulo isotopy and the following relations.

\begin{equation}
\xy
 (0,0)*{
\begin{tikzpicture} [decoration={markings,
                         mark=at position 0.6 with {\arrow{>}};    }]
\draw[dashed] (.75,0) arc(0:360:.75);
\draw[very thick,->] (.5,0) arc(0:360:.5);
\end{tikzpicture}};
\endxy
\;\; =\;-A^2 \;\;
\xy
 (0,0)*{
\begin{tikzpicture} [decoration={markings,
                         mark=at position 0.6 with {\arrow{>}};    }]
\draw[dashed] (.75,0) arc(0:360:.75);
\end{tikzpicture}};
\endxy
\quad , \quad
\xy
 (0,0)*{
\begin{tikzpicture} [decoration={markings,
                         mark=at position 0.6 with {\arrow{>}};    }]
\draw[dashed] (.75,0) arc(0:360:.75);
\draw[very thick,<-] (.5,0) arc(0:360:.5);
\end{tikzpicture}};
\endxy
\;\; =\;-A^{-2} \;\;
\xy
 (0,0)*{
\begin{tikzpicture} [decoration={markings,
                         mark=at position 0.6 with {\arrow{>}};    }]
\draw[dashed] (.75,0) arc(0:360:.75);
\end{tikzpicture}};
\endxy
\label{relA_circ}
\end{equation}

\begin{equation}
\xy
(0,0)*{
\begin{tikzpicture}[baseline=0cm, scale=0.6]
\draw[style=very thick, <-] (0,0) -- (0,2);
\draw[style=very thick, ->](1,0)--(1,2);
\draw[radius=1.2, style=dashed](.5,1)circle;
\end{tikzpicture}};
\endxy
\quad = \quad
\xy
(0,0)*{
\begin{tikzpicture}[baseline=0cm, scale =.6]
\draw[radius=1.2, style=dashed](4.5,1)circle;
\draw [style=very thick, ->](4,2) arc (180:360: .5cm);
\draw [style=very thick, <-](4,0) arc (180:0: .5cm);
\draw [style=very thick, <-](4.8,1) arc (0:180: .3cm);
\draw [style=very thick](4.8,1) arc (360:180: .3cm);
\end{tikzpicture}};
\endxy
\quad,\quad
\xy
(0,0)*{
\begin{tikzpicture}[baseline=0cm, scale=0.6]
\draw[style=very thick, ->] (0,0) -- (0,2);
\draw[style=very thick, <-](1,0)--(1,2);
\draw[radius=1.2, style=dashed](.5,1)circle;
\end{tikzpicture}};
\endxy
\quad = \quad
\xy
(0,0)*{
\begin{tikzpicture}[baseline=0cm, scale =.6]
\draw[radius=1.2, style=dashed](4.5,1)circle;
\draw [style=very thick, <-](4,2) arc (180:360: .5cm);
\draw [style=very thick, ->](4,0) arc (180:0: .5cm);
\draw [style=very thick, ->](4.8,1) arc (0:180: .3cm);
\draw [style=very thick](4.8,1) arc (360:180: .3cm);
\end{tikzpicture}};
\endxy
\label{relA_Reid}
\end{equation}

A key implication of Relations \ref{relA_Reid} is that parallel non-trivial curves with opposite orientations cancel in $\Aa$. Indeed, applying the appropriate relation yields two homotopically trivial and oppositely oriented circles; using Relations \ref{relA_circ}, the circles can be removed, and their coefficients multiply to $(-A^2)\cdot( -A^{-2}) = 1$. In effect, parallel curves with opposite orientations are inverse elements  in $\Aa$. This will be a key idea in understanding the role of the Chebyshev polynomials. 

For a given multi-curve in $\Aa$, Relations \ref{relA_circ} enable us to delete all trivial components. What remains is a collection of parallel, oriented copies of some $(p,q)$ curve not all of which are necessarily oriented in the same direction. As previously discussed, Relations \ref{relA_Reid} let us delete adjacent, oppositely oriented curves. Any multi-curve may therefore be reduced to parallel, identically oriented copies of a $(p,q)$ curve with a coefficient in $R$. Note that the topology of the torus plays a role here: non-trivial curves can only be parallel copies of the same connected non-trivial curve, which is not true for other surfaces.

Let $\gamma_{(np,nq)}$ denote the element of $\Aa$ given by $n$ parallel copies of the oriented curve $(p,q)$ where the orientation is chosen so that its homology class is $p\lambda + q\mu$. Then $\{\gamma_{(np,nq)}\;|\;GCD(p,q)=1,\;n>0,\;(p,q)\in \Z\times \Z \setminus (0,0)\}$ together with the empty curve constitutes a generating set for the $R$-module $\Aa$. Note the allowable values for $(p,q)$ have expanded from the unoriented case to account for both orientations of toric curves.

\begin{proposition} \label{prop:basisAa}
 $\{\gamma_{(np,nq)}\;|\;GCD(p,q)=1,n>0,\;(p,q)\in \Z\times \Z \setminus (0,0)\}\cup\{\emptyset\}$ is a basis of $\Aa$.
\end{proposition}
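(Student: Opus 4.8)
The plan is to establish that the proposed generating set is linearly independent over $R$, since it has already been argued in the text that it spans $\Aa$. The natural tool is a homomorphism from $\Aa$ to a module where we can read off coefficients. I would construct an $R$-linear map $\Phi \colon \Aa \to M$, where $M$ is the free $R$-module on the symbols $\{(np,nq) : GCD(p,q)=1,\ n>0,\ (p,q)\in\Z^2\setminus(0,0)\}\cup\{\emptyset\}$, by sending an oriented multicurve diagram $D$ to $(\prod_c w_c)\cdot b(D)$, where the product ranges over the trivial components $c$ of $D$ (contributing $-A^2$ or $-A^{-2}$ according to orientation, matching Relations \ref{relA_circ}), and $b(D)$ is the basis symbol $(kp,kq)$ recording the \emph{signed} total homology class $k(p\lambda+q\mu)$ of the non-trivial components (or $\emptyset$ if there are none, or if the signed classes cancel). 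The content is that $\Phi$ is well-defined, i.e.\ descends through isotopy and Relations \ref{relA_circ}--\ref{relA_Reid}; then since $\Phi(\gamma_{(np,nq)}) = (np,nq)$ and $\Phi(\emptyset)=\emptyset$, the images form an $R$-basis of $M$, forcing the $\gamma$'s and $\emptyset$ to be independent in $\Aa$.

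First I would pin down the target $M$ precisely and define $\Phi$ on diagrams, checking it is insensitive to which connected diagram represents a given isotopy class — here the classification of embedded curves on $\tor$ by homology class is what makes $b(D)$ well-defined. Next I would verify invariance under Relations \ref{relA_circ}: removing an oriented trivial circle multiplies the scalar factor by exactly $-A^{\pm2}$ and leaves $b(D)$ unchanged, since a trivial component contributes $0$ to homology. Then comes invariance under Relations \ref{relA_Reid}: the left-hand diagram has two oppositely oriented non-trivial strands whose contributions to the signed homology class cancel, while the right-hand diagram is (after the local move) two oppositely oriented trivial circles with scalar factor $(-A^2)(-A^{-2})=1$ times the rest; both sides therefore have the same scalar factor and the same $b$-value, as recorded in the discussion following Relations \ref{relA_Reid}. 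Finally, having well-definedness, I note $\Phi$ is surjective because each basis symbol $(np,nq)$ is hit by $\gamma_{(np,nq)}$, and the spanning statement plus surjectivity of $\Phi$ onto a free module of the "right size" gives that the spanning set is a basis.

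The main obstacle I expect is the well-definedness of $b(D)$ under isotopy together with the bookkeeping of \emph{signs} in the homology class: one must check that after deleting trivial components and cancelling oppositely oriented parallel copies via Relations \ref{relA_Reid}, what is left is genuinely a collection of \emph{identically} oriented parallel copies of a single $(p,q)$ curve with a well-defined sign, so that the assignment to a basis element is unambiguous. This is essentially the reduction argument already sketched in the text before the proposition; the subtlety is confirming that distinct diagrams representing the same isotopy class of an oriented multicurve yield the same net signed class, which follows because homology class is an isotopy invariant and because on the torus any two non-trivial connected curves that appear must be parallel copies of one $(p,q)$-curve. Once that invariant is in hand, the verification against Relations \ref{relA_circ}--\ref{relA_Reid} is a short local computation, and the conclusion is immediate.
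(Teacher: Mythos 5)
Your overall strategy --- build an explicit $R$-linear evaluation map $\Phi$ from $\Aa$ to the free module on the proposed basis symbols, check that it respects the defining relations, and read off linear independence from the fact that $\Phi$ sends the generators bijectively onto part of a free basis --- is sound, and it is close in spirit to the paper's argument (the paper's invariant is the pair consisting of the degree of the Gauss map and the total homology class, combined with the exponent of $A$; your $\Phi$ packages the same data into a single map). The logical endgame is also fine, though it is not ``surjectivity onto a module of the right size'' that you need, but simply that a spanning set mapping injectively into a free basis can satisfy no nontrivial linear relation.

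The genuine gap is in the verification that $\Phi$ is invariant under Relation \ref{relA_Reid}. This is a \emph{local} relation: the two antiparallel strands in the dashed disk need not be two complete parallel non-trivial curves, which is the only configuration you check. They may be two strands of a single component (trivial or not), or strands of two trivial components, and so on. In those cases the move reorganizes the components --- it always creates one new small circle and either merges two components into one or splits one into two --- so both the set of trivial components and their individual weights change. For instance, a single embedded trivial circle pushed through the disk along two antiparallel arcs becomes, after the move, \emph{three} trivial circles; your scalar factor survives only because the three signs multiply to $(-1)^{3}=-1$ and the winding numbers of the three circles sum to that of the original one. Establishing invariance in general requires exactly the two facts your write-up omits: (i) the total degree of the Gauss map is preserved by Relation \ref{relA_Reid} and equals $\pm1$ on trivial components and $0$ on non-trivial ones, which controls the total exponent of $A$ in $\prod_c w_c$; and (ii) the parity of the number of trivial components is preserved (the total component count changes by $0$ or $2$, while the number of non-trivial components changes by an even amount because their signed count is a homology invariant), which controls the overall sign. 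Point (i) is precisely the winding-number argument that carries the paper's proof; without it your ``short local computation'' does not go through, because the weight $-A^{\pm2}$ of a trivial component is not a quantity visible locally at the site of the move.
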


\noindent \begin{proof}
By the argument above,  $\{\gamma_{(np,nq)}\;|\;GCD(p,q)=1,\;n>0,\;(p,q)\in \Z\times \Z \setminus (0,0)\}\cup\{\emptyset\}$ generates $\Aa$. We now wish to prove that if there is a linear combination $\sum_{i}\alpha \gamma_i=0$ with $\alpha_i\in R$ and $\gamma_i$ in the set of generators, we must have all $\alpha_i=0$.

Let us first introduce a technical tool. Assume that an orthonormal vector field $(\ai,\aj)$ has been fixed on the torus (it can be determined by $\lambda$ and $-\mu$ for example, which we choose here). If $\gamma$ is connected and if the curve representing it is smooth, given by $s$ a function from $\Ss^1$ to the torus, then $t\rightarrow \frac{(Ts(t)\cdot \ai,Ts(t)\cdot \aj)}{||(Ts(t)\cdot \ai,Ts(t)\cdot \aj)||}$ gives a continuous function $\Ss^1\rightarrow \Ss^1$, where $Ts(t)$ is the tangent vector and the dot denotes the scalar product (this is called the \emph{Gauss map}). In homology, the function $\Ss^1\rightarrow \Ss^1$ we get is the multiplication by an integer $\delta(\gamma)$. This process can be extended linearly to non-connected curves. For connected curves with non-trivial homology, the Gauss integer is zero, while it is $\pm 1$ for trivial circles.

Relation \ref{relA_Reid} preserves this integer. In Relation \ref{relA_circ}, the sum of half the power of $A$ (so $\pm 1$) and of this integer is preserved (since the deletion of a circle is balanced by the multiplication by $-A^{\pm 2}$). This implies that the sum of half the power of $A$ and of the index gives a homogeneous decomposition of elements with respect to the relations, and hence we can decompose $\sum_i \alpha_i \gamma_i$ as follows:
\[
\sum_i\alpha_i\gamma_i=0=\sum_kA^k(\sum_ib_i\gamma_i)\;\text{with}\; b_i\in \Z \; \Rightarrow
\forall k\in \Z, \; \sum_i b_i \gamma_i=0.
\]

Now, observe that setting $A^2=-1$ induces a map: $\Aa^{\Theta}\mapsto H_1$ where $H_1$ is the homology of the torus. $\sum_i b_i \gamma_i=0$ is sent under this map to the same equation holding in homology. But the generators $\gamma$ form a basis of the homology, and therefore we have $b_i=0$ for all $i$. Doing this for all value of $k$, we finally get:
\[\sum_i\alpha_i\gamma_i=0 \; \Rightarrow \;\forall i,\;\alpha_i=0,\]
which proves that we have a basis of the module $\Aa^{\Theta}$.

\end{proof}

\begin{corollary}
 $\Aa$ is a torsion-free module.
\end{corollary}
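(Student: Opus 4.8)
The plan is to read off torsion-freeness directly from Proposition \ref{prop:basisAa}. That proposition exhibits an explicit $R$-basis of $\Aa$, namely the set of all $\gamma_{(np,nq)}$ together with the empty curve; in other words $\Aa$ is a \emph{free} $R$-module. The corollary is then the standard algebraic fact that a free module over an integral domain is torsion-free, so the only extra ingredient to record is that $R=\Z[A,A^{-1}]$ is an integral domain. This is clear: $R$ is the ring of Laurent polynomials over $\Z$, i.e.\ the localization of the polynomial ring $\Z[A]$ at the multiplicative set $\{A^n\}_{n\geq 0}$, and a localization of an integral domain is an integral domain.

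Concretely, I would fix the basis from Proposition \ref{prop:basisAa}, write an arbitrary $x\in\Aa$ as a finite sum $x=\sum_i\alpha_i e_i$ with the $e_i$ distinct basis elements and $\alpha_i\in R$, and suppose $r\in R$ is nonzero with $r\cdot x=0$. Then $\sum_i (r\alpha_i)\,e_i=0$, and linear independence of the basis forces $r\alpha_i=0$ for every $i$. Since $R$ is an integral domain and $r\neq 0$, each $\alpha_i=0$, hence $x=0$. Thus no nonzero element of $\Aa$ is annihilated by a nonzero element of $R$, which is exactly the assertion that $\Aa$ is torsion-free.

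There is no genuine obstacle in this argument: all the work has already been done in establishing the basis in Proposition \ref{prop:basisAa}, and the remaining step is the routine implication ``free over a domain $\Rightarrow$ torsion-free.'' The only thing worth a word of care is making sure the relevant scalar ring is indeed $R=\Z[A,A^{-1}]$ (not, say, a quotient such as the one obtained by setting $A^2=-1$ that appeared in the proof of Proposition \ref{prop:basisAa}), since torsion-freeness is a statement about the honest module $\Aa$ over $R$.
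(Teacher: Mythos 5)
Your argument is correct and is exactly the intended one: the paper states this as an immediate corollary of Proposition \ref{prop:basisAa} without further proof, relying precisely on the fact that a module with a basis over the integral domain $R=\Z[A,A^{-1}]$ is free and hence torsion-free. Your write-up just makes that routine step explicit.
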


We define a multiplication on $\Aa$ via superposition and oriented smoothing as depicted in \ref{def:AaSmooth}.

\begin{equation} \label{def:AaSmooth}
\xy
 (0,0)*{
\begin{tikzpicture} [decoration={markings,
                         mark=at position 1 with {\arrow{>}};    }]
\draw[dashed] (.75,0) arc(0:360:.75);
\draw[very thick, postaction={decorate}] (.5,-.5) -- (-.5,.5);
\draw[white, double, double distance=1.25pt, double=black, ultra thick] (-.5,-.5) -- (.4,.4);
\draw[very thick, postaction={decorate}] (.4,.4) -- (.5,.5); 
\end{tikzpicture}};
\endxy
\;\; =\;\; A\;\;
\xy
 (0,0)*{
\begin{tikzpicture}
\draw[dashed] (.75,0) arc(0:360:.75);
\draw[very thick, ->] (-.5,-.5) .. controls (0,0) .. (-.5,.5);
\draw[very thick, ->] (.5,-.5) .. controls (0,0) .. (.5,.5); 
\end{tikzpicture}};
\endxy
\quad, \quad
\xy
 (0,0)*{
\begin{tikzpicture} [decoration={markings,
                         mark=at position 1 with {\arrow{>}};    }]
\draw[dashed] (.75,0) arc(0:360:.75);
\draw[very thick, postaction={decorate}] (-.5,.5) -- (.5,-.5);
\draw[white, double, double distance=1.25pt, double=black, ultra thick] (-.5,-.5) -- (.4,.4);
\draw[very thick, postaction={decorate}] (.4,.4) -- (.5,.5); 
\end{tikzpicture}};
\endxy
\;\; =\;\; A^{-1}\;\;
\xy
 (0,0)*{
\begin{tikzpicture} [decoration={markings,
                         mark=at position 0.6 with {\arrow{>}};    }]
\draw[dashed] (.75,0) arc(0:360:.75);
\draw[very thick,->] (-.5,.5) .. controls (0,0) .. (.5,.5);
\draw[very thick,->] (-.5,-.5) .. controls (0,0) .. (.5,-.5); 
\end{tikzpicture}};
\endxy
\end{equation}

\begin{proposition}\label{Amult}
 The multiplication process is well-defined.
\end{proposition}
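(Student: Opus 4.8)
The plan is to show that the smoothing rules in \ref{def:AaSmooth}, applied to a generic superposition of two oriented multicurves, produce a well-defined element of $\Aa$, independent of all the choices involved in setting up the superposition. The choices are: the ambient isotopy used to put the two multicurves in generic position (equivalently, the order in which the crossings are resolved, and the planar diagram representing the result), and — once we pass to a picture on the torus — the framed Reidemeister moves and isotopies relating two such diagrams. Since $\Aa$ is defined by generators and relations, the natural strategy is to check invariance under each local move one at a time, exactly as one does for the ordinary Kauffman bracket.

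First I would observe that resolving a single positive (resp. negative) oriented crossing via \ref{def:AaSmooth} is forced: because the two strands carry compatible orientations, only the oriented smoothing is allowed, and the coefficient $A^{\pm1}$ is exactly the one appearing in Kauffman's rule \ref{KauffBrSmoothing} restricted to that smoothing. Hence, after resolving all crossings, the result is the same formal $R$-linear combination of oriented multicurves one would get from the usual Kauffman bracket computation, with the understanding that homotopically trivial circles are evaluated by \ref{relA_circ} rather than \ref{KauffBrCircle}. The content of well-definedness is therefore: (i) the value does not depend on which crossing we resolve first — this is immediate since distinct crossings involve disjoint local disks and the rule is local; (ii) the value is invariant under the framed Reidemeister moves RII and RIII and under the framed RI move $R'_I$ of \ref{Framed_R1}, together with planar isotopy. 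For RII and RIII the argument is the classical one: expand all crossings, and the oriented smoothings recombine so that the usual Kauffman-bracket cancellations go through verbatim (the orientations are simply carried along and match up on both sides, since RII and RIII preserve orientations of strands). For the framed RI move, expanding the single crossing in the left-hand side of \ref{Framed_R1} produces the curve on the right-hand side plus a term containing a small oriented trivial circle; that circle contributes $-A^{\pm2}$ by \ref{relA_circ}, while the crossing contributed $A^{\mp1}$, and the writhe-type bookkeeping shows these combine to return exactly the right-hand side. This is precisely where Relations \ref{relA_circ} are designed to make the framed RI move an identity, in parallel with how the planar Kauffman bracket is framed-invariant.

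The one genuinely new point, beyond replaying the planar argument, is that the superposition is defined by placing one multicurve in the level $S\times\{1\}$ and the other in $S\times\{0\}$, and then projecting; the resulting diagram depends on the generic-position isotopy chosen. Two such choices differ by an ambient isotopy of $\tor\times\I$ that keeps the two levels separated, which projects to a sequence of planar isotopies and framed Reidemeister moves in which, at every crossing, the ``over'' strand is consistently the one from the top multicurve. Invariance under exactly this restricted set of moves is what (i)–(ii) above establish, so the product is independent of the choice. Finally, one should note that the orientation data is never destroyed by any of these moves — oriented smoothings, RII, RIII, and $R'_I$ all return oriented diagrams — so the output legitimately lands in $\Aa$ and not in some larger module; and since the relations defining $\Aa$ are exactly \ref{relA_circ} and \ref{relA_Reid}, and we have just checked the product respects \ref{relA_Reid} (it preserves the cancellation of oppositely oriented parallel curves, because that cancellation is itself a consequence of \ref{relA_Reid} applied away from the crossings) as well as \ref{relA_circ}, the multiplication descends to $\Aa$.

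I expect the main obstacle to be purely organizational rather than deep: carefully enumerating the local moves that relate two generic superpositions with a fixed over/under convention, and checking that the orientation labels propagate consistently through the RII and RIII expansions so that the standard Kauffman-bracket cancellations still apply term by term. In particular, one must be slightly careful that an oriented RII move can create a pair of oppositely oriented parallel arcs, and verify that the two ways of smoothing reproduce the identity in $\Aa$ using \ref{relA_Reid} together with \ref{relA_circ} — this is the step most likely to require an explicit picture. Everything else is a direct transcription of the classical proof that the Kauffman bracket skein relations give a well-defined product, with $-A^{\pm2}$ in place of $-A^2-A^{-2}$ for trivial circles.
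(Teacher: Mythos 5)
Your core argument coincides with the paper's: the only real content is that the oriented smoothing of the superposition is unchanged by the moves induced by isotoping the factors before superposing, and you correctly isolate the antiparallel Reidemeister~II configuration as the place where Relation~\ref{relA_Reid} must be invoked (the parallel configuration gives $A\cdot A^{-1}=1$ directly). That two-case RII check is exactly the paper's proof of Proposition~\ref{Amult}.

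There is, however, one step in your verification that is false as stated: the framed Reidemeister~I move~\ref{Framed_R1} does \emph{not} hold in $\Aa$. Since each oriented crossing has a unique smoothing, removing a curl multiplies the diagram by $A^{\epsilon}\cdot(-A^{2\delta})$ with $\epsilon,\delta\in\{\pm1\}$, and for the two curls of $R'_I$ these factors do not cancel; the paper records this explicitly immediately after the proposition (``even the framed version of Reidemeister I fails''), and it is consistent with the grading used in the proof of Proposition~\ref{prop:basisAa}, where the relations preserve the sum of the Gauss index and half the exponent of $A$ while curls shift the Gauss index. So your ``writhe-type bookkeeping'' would not close up. Fortunately the step is unnecessary: in the superposition of two embedded multicurves every crossing is between a top strand and a bottom strand, so neither factor ever acquires a self-crossing, and an isotopy of one factor relative to the other produces only tangency (RII) events --- no RI, and in fact no RIII either, since a triple point would force two strands of the same embedded factor to cross. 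Once the RI (and RIII) discussion is deleted, what remains is correct, though the claim that ``the usual Kauffman-bracket cancellations go through verbatim'' is misleading: there is no state sum here (one smoothing per crossing), and the RII check is the direct two-case computation above rather than the classical four-term cancellation.
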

\begin{proof}
Isotopy of curves before superposition corresponds after superposition to oriented Reidemeister II moves. For each possible orientation of a Reidemeister II move, the oriented smoothings of the two sides are equal in $\Aa$. Depending on orientation, this can be seen in the following calculations.

\[
\xy
(0,0)*{
\begin{tikzpicture}[baseline=0cm, scale=0.6]
\draw[style=very thick, ->] (0,0) -- (0,2);
\draw[style=very thick, ->](1,0)--(1,2);
\draw[radius=1.2, style=dashed](.5,1)circle;
\end{tikzpicture}};
\endxy
\quad = \quad A \cdot A^{-1} \;
\xy
(0,0)*{
\begin{tikzpicture}[baseline=0cm, scale=0.6]
\draw[style=very thick, ->] (0,0) .. controls (.5,.5) and (0,.8) .. (0,1) .. controls (0,1.2) and (.5,1.5) .. (0,2);
\draw[style=very thick, ->](1,0) .. controls (.5,.5) and (1,.8) .. (1,1) .. controls (1,1.2) and (.5,1.5) .. (1,2);
\draw[radius=1.2, style=dashed](.5,1)circle;
\end{tikzpicture}};
\endxy
\quad = \quad
\xy
(0,0)*{
\begin{tikzpicture}[baseline=0cm, scale=0.6]
\draw[style=very thick, ->] (0,0) .. controls (1,1) .. (0,2);
\draw [style= very thick] (1,0) .. controls (.8,.15) ..  (.6,.35);
\draw [style = very thick] (.4,.55) .. controls (.1,.85) and (.1,1.15) .. (.4,1.45);
\draw[style=very thick, ->](.6,1.65) .. controls (.8,1.85) .. (1,2);
\draw[radius=1.2, style=dashed](.5,1)circle;
\end{tikzpicture}};
\endxy 
\]

\vspace{.1in}

\[
\xy
(0,0)*{
\begin{tikzpicture}[baseline=0cm, scale=0.6]
\draw[style=very thick, <-] (0,0) -- (0,2);
\draw[style=very thick, ->](1,0)--(1,2);
\draw[radius=1.2, style=dashed](.5,1)circle;
\end{tikzpicture}};
\endxy
\quad  \stackrel{3.0.7}{=} \quad
\xy
(0,0)*{
\begin{tikzpicture}[baseline=0cm, scale =.6]
\draw[radius=1.2, style=dashed](4.5,1)circle;
\draw [style=very thick, ->](4,2) arc (180:360: .5cm);
\draw [style=very thick, <-](4,0) arc (180:0: .5cm);
\draw [style=very thick, <-](4.8,1) arc (0:180: .3cm);
\draw [style=very thick](4.8,1) arc (360:180: .3cm);
\end{tikzpicture}};
\endxy
\quad = \quad
\xy
(0,0)*{
\begin{tikzpicture}[baseline=0cm, scale=0.6]
\draw[style=very thick, <-] (0,0) .. controls (1,1) .. (0,2);
\draw [style= very thick] (1,0) .. controls (.8,.15) ..  (.6,.35);
\draw [style = very thick] (.4,.55) .. controls (.1,.85) and (.1,1.15) .. (.4,1.45);
\draw[style=very thick, ->](.6,1.65) .. controls (.8,1.85) .. (1,2);
\draw[radius=1.2, style=dashed](.5,1)circle;
\end{tikzpicture}};
\endxy \hspace{.35in}
\]

\end{proof}

As with unoriented links and the Kauffman Bracket skein module, there exists a map from oriented links in the thickened torus to $\Aa$ defined by projection followed by oriented smoothing. Proposition \ref{Amult} shows that Reidemeister II holds in this context. One can check that Reidemeister III also holds. However, even the framed version of Reidemeister I fails. It appears actually that the image in $\Aa$ of an oriented framed knot is exactly invariant under moves that keep the framing transverse to obvious vertical vector field. These moves can be proven to be RII and RIII by looking at the Gauss map associated with projections. Although the module $\Aa$ is not exactly a skein module for oriented knots in the thickened torus, viewing Reidemeister moves via relations in $\Aa$ is useful for relating $\Aa$ to knot invariants.

We conclude this section with a product-to-sum formula for multiplying basis elements in $\Aa$. This is key in establishing our main result.

\begin{proposition}
Let $\gamma_{a,b}, \gamma_{-a,-b}, \gamma_{c,d}$ and $\gamma_{-c,-d}$ be basis elements in $\Aa$. Then the following formula holds.
$$
(\gamma_{a,b}+\gamma_{-a,-b}) \ast (\gamma_{c,d}+\gamma_{-c,-d})= A^{\tiny{{ }^{\setlength{\arraycolsep}{1pt} \begin{vmatrix}
a & c \\
b & d
\end{vmatrix}}}} (\gamma_{a-c,b-d}+\gamma_{-a+c,-b+d}) 
 +A^{\tiny{{ }^{-\setlength{\arraycolsep}{1pt} \begin{vmatrix}
a & c \\
b & d
\end{vmatrix}}}} (\gamma_{a+c,b+d}+\gamma_{-a-c,-b-d})$$
\end{proposition}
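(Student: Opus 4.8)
The plan is to compute the left-hand side directly, using the oriented smoothing rule \ref{def:AaSmooth} on all crossings, and then organize the resulting terms according to the reduction procedure established for $\Aa$: first delete null-homotopic circles via Relations \ref{relA_circ}, then cancel adjacent oppositely-oriented parallel copies via Relations \ref{relA_Reid}. The key geometric observation is that when we superpose the four oriented multi-curves — say $\gamma_{a,b}$ and $\gamma_{-a,-b}$ represent an oriented copy and a reversed copy of the $(p,q)$-curve (with $(p,q)=(a,b)/\gcd$), and similarly $\gamma_{c,d},\gamma_{-c,-d}$ — the total number of crossings between the ``$(a,b)$-family'' and the ``$(c,d)$-family'' is the absolute value of the determinant $\begin{vmatrix} a & c \\ b & d \end{vmatrix}$, with the orientation at each crossing of a given family-pair being consistent. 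At each crossing there are exactly two oriented smoothings available by \ref{def:AaSmooth}, weighted by $A$ or $A^{-1}$ depending on orientation.

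First I would analyze the crossings between a single oriented $(a,b)$-curve and a single oriented $(c,d)$-curve, keeping orientations. All $|\det|$ crossings look the same up to rotation, so the same smoothing choice is forced to give a consistent oriented resolution: resolving \emph{all} crossings one way yields the connected (or multi-) curve homologous to $(a+c,b+d)$ with total coefficient $A^{\pm\det}$ (all crossings contributing the same power), while resolving all the other way yields $(a-c,b-d)$ with coefficient $A^{\mp\det}$; any \emph{mixed} resolution produces a null-homotopic circle somewhere, which then either forces further cancellation or contributes a factor $-A^{\pm2}$ that must be tracked. The cleanest way to see that mixed resolutions all cancel is the homological/degree bookkeeping already used in the proof of Proposition \ref{prop:basisAa}: the ``half-power-of-$A$ plus Gauss index'' grading is preserved, and only the two coherent smoothings land in the correct graded piece, so everything else must cancel among itself (since $\Aa$ is torsion-free, a self-cancelling graded combination that maps to $0$ in homology at $A^2=-1$ is genuinely $0$). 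I would then observe that reversing the orientation of \emph{both} strands at a crossing leaves the smoothing weights unchanged, while reversing only one strand swaps $A \leftrightarrow A^{-1}$ and also swaps the roles of ``$+$'' and ``$-$''; this is exactly why the four cross-terms $\gamma_{\pm a,\pm b}\ast\gamma_{\pm c,\pm d}$ pair up to produce the symmetric right-hand side — the two terms $\gamma_{a,b}\ast\gamma_{c,d}$ and $\gamma_{-a,-b}\ast\gamma_{-c,-d}$ both contribute $A^{\det}\gamma_{a-c,b-d}$-type and $A^{-\det}\gamma_{a+c,b+d}$-type pieces with matching signs, and likewise for the other diagonal pair with $\det$ replaced by $-\det$.

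The main obstacle I anticipate is the careful treatment of the ``mixed'' smoothings and of the case where the determinant is not $\pm1$, i.e., where the superposed diagram genuinely has many crossings and intermediate null-homotopic circles appear: one must be sure that the sign $-A^{\pm2}$ coming from deleting such a circle via \ref{relA_circ} combines correctly with the smoothing weights so that every mixed term either cancels against another mixed term or recombines into one of the two desired coherent terms. I would handle this either (a) by induction on $|\det|$, peeling off one crossing at a time and using the already-established relations, or (b) more slickly, by the grading argument: compute the image of both sides under $A^2=-1$ in $H_1(\tor)$ — where the relations degenerate and the product becomes addition of homology classes, giving $(\pm a\pm c,\pm b\pm d)$ with the right multiplicities — and separately match the $A$-powers using the degree-preservation already proved, then invoke torsion-freeness (the Corollary) to conclude the two sides of the formula agree on the nose in $\Aa$.
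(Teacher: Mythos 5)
There is a genuine gap here, and it stems from a misreading of how multiplication in $\Aa$ is defined. You write that at each crossing ``there are exactly two oriented smoothings available,'' and you then build the whole argument around a $2^{k}$-term expansion in which ``mixed resolutions'' create null-homotopic circles whose contributions must be shown to cancel (via induction on the determinant, or via the grading-plus-torsion-freeness argument). But in $\Aa$ an oriented crossing has a \emph{unique} resolution: of the two planar smoothings, only one is compatible with the orientations of the two strands, and rule \ref{def:AaSmooth} assigns it coefficient $A$ or $A^{-1}$ according to the local orientation pattern. This is exactly the feature the oriented module is designed to have (it is stated again in the proof of Theorem \ref{THM_iso}: each oriented crossing has a unique smoothing). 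Consequently, once the left-hand side is expanded into the four cross-terms $\gamma_{\pm a,\pm b}\ast\gamma_{\pm c,\pm d}$, each cross-term resolves deterministically to a single monomial in $A$ times a single basis element: there are no mixed terms, no intermediate trivial circles to track, and no cancellations to arrange. The entire machinery you introduce to control them addresses a problem that does not exist, and the actual content of the proof is much shorter: all crossings between the two oriented families have the same sign, so the coefficient is $A^{\pm\det}$ with exponent the signed intersection number, and oriented smoothing preserves the total homology class, which identifies the resulting basis element.

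A second, related error: you assert that resolving $\gamma_{a,b}\ast\gamma_{c,d}$ ``one way'' yields the class $(a+c,b+d)$ while resolving it ``the other way'' yields $(a-c,b-d)$. This cannot happen. Oriented smoothing preserves the homology class of the oriented diagram, which for this cross-term is $(a+c,b+d)$; the class $(a-c,b-d)$ can only arise from the cross-terms $\gamma_{a,b}\ast\gamma_{-c,-d}$ and $\gamma_{-a,-b}\ast\gamma_{c,d}$. So the two terms on the right-hand side are matched to the four cross-terms of the expanded product, two apiece, rather than to two different resolution choices within a single product. Finally, even as a fallback, your route (b) would not pin down the answer: evaluating at $A^{2}=-1$ in $H_{1}(\tor)$ together with the degree bookkeeping constrains the coefficient but does not by itself produce the exact monomial $A^{\pm\det}$, whereas the direct diagrammatic computation gives it immediately.
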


\begin{proof}
The proof is somewhat immediate when visualized diagrammatically. On the left hand side, recall that each basis element consists of a collection of identically oriented parallel curves. Expanding the product, we have four terms each of the form $\gamma\ast\gamma'$  with $\gamma, \gamma'$ basis elements. Understanding these individual terms is easier than in the unoriented setting since there is a unique oriented smoothing for any product of curves. After smoothing crossings, then, the product still has only four terms. Moreover, these terms are two pairs of basis elements of $\Aa$ each pair consisting of the two possible orientations of the same underlying parallel multicurve. It is easy to identify the result of the smoothing of each of the products, in particular since the homotopy is preserved: we obtain curves $\gamma_{\pm (a+c,b+d)}$ and $\gamma_{\pm(a-c,b-d)}$.

Example \ref{ex:OrProdProof} should help illuminate the proof.

\end{proof}

\begin{example} \label{ex:OrProdProof}
To help illuminate this, we give the following example.
\begin{eqnarray}
\left(
\xy (0,0)*{
  \begin{tikzpicture} [scale=.3,  decoration={markings, 
                        mark=at position 0.25 with {\arrow{>}};    }]
%% Torus
\draw (-4,0) .. controls (-4,1.2) and (-2,2) .. (0,2) .. controls (2,2) and (4,1.2) .. (4,0) .. controls (4,-1.2) and (2,-2) .. (0,-2) .. controls (-2,-2) and (-4,-1.2) .. (-4,0);
\draw (-2,0.2) arc (-120:-60:4);
\draw (1.36,-0.08) arc (70:110:4);
%% Parallel curves
\draw [very thick, postaction={decorate}] (2.4,0).. controls (2.4,1.2) and (-2.4,1.2) .. (-2.4,0) .. controls (-2.4,-1.2)  and (2.4,-1.2) .. (2.4,0);
\draw [very thick, postaction={decorate}] (3.2,0).. controls (3.2,2) and (-3.2,2) .. (-3.2,0) .. controls (-3.2,-2)  and (3.2,-2) .. (3.2,0);
    \end{tikzpicture}};
\endxy
+
\xy (0,0)*{
  \begin{tikzpicture} [scale=.3,  decoration={markings, 
                        mark=at position 0.25 with {\arrow{<}};    }]
%% Torus
\draw (-4,0) .. controls (-4,1.2) and (-2,2) .. (0,2) .. controls (2,2) and (4,1.2) .. (4,0) .. controls (4,-1.2) and (2,-2) .. (0,-2) .. controls (-2,-2) and (-4,-1.2) .. (-4,0);
\draw (-2,0.2) arc (-120:-60:4);
\draw (1.36,-0.08) arc (70:110:4);
%% Parallel curves
\draw [very thick, postaction={decorate}] (2.4,0).. controls (2.4,1.2) and (-2.4,1.2) .. (-2.4,0) .. controls (-2.4,-1.2)  and (2.4,-1.2) .. (2.4,0);
\draw [very thick, postaction={decorate}] (3.2,0).. controls (3.2,2) and (-3.2,2) .. (-3.2,0) .. controls (-3.2,-2)  and (3.2,-2) .. (3.2,0);
    \end{tikzpicture}};
\endxy
\right)
&\ast&
\left(
\xy (0,0)*{
  \begin{tikzpicture} [scale=.3,  decoration={markings, 
                        mark=at position 0.5 with {\arrow{>}};    }]
%% Torus
\draw (-4,0) .. controls (-4,1.2) and (-2,2) .. (0,2) .. controls (2,2) and (4,1.2) .. (4,0) .. controls (4,-1.2) and (2,-2) .. (0,-2) .. controls (-2,-2) and (-4,-1.2) .. (-4,0);
\draw (-2,0.2) arc (-120:-60:4);
\draw (1.36,-0.08) arc (70:110:4);
%% Parallel curves
\draw [very thick,postaction={decorate}] (.5,-2) .. controls (.9,-2) and (.7,-.3) ..(.3,-.3);
\draw [very thick,postaction={decorate}] (1.3,-1.9) .. controls (1.7,-1.9) and (1.4,-.2) ..(1.1,-.2);
%% %% Dotted part
 \draw [dotted, very thick] (.5,-2) .. controls (.1,-2) and (.1,-.3) .. (.3,-.3);
 \draw [dotted, very thick] (1.3,-1.9) .. controls (1.1,-1.9) and (.9,-.2) .. (1.1,-.2);
    \end{tikzpicture}};
\endxy
+
\xy (0,0)*{
  \begin{tikzpicture} [scale=.3,  decoration={markings, 
                        mark=at position 0.5 with {\arrow{<}};    }]
%% Torus
\draw (-4,0) .. controls (-4,1.2) and (-2,2) .. (0,2) .. controls (2,2) and (4,1.2) .. (4,0) .. controls (4,-1.2) and (2,-2) .. (0,-2) .. controls (-2,-2) and (-4,-1.2) .. (-4,0);
\draw (-2,0.2) arc (-120:-60:4);
\draw (1.36,-0.08) arc (70:110:4);
%% Parallel curves
\draw [very thick,postaction={decorate}] (.5,-2) .. controls (.9,-2) and (.7,-.3) ..(.3,-.3);
\draw [very thick,postaction={decorate}] (1.3,-1.9) .. controls (1.7,-1.9) and (1.4,-.2) ..(1.1,-.2);
%% %% Dotted part
 \draw [dotted, very thick] (.5,-2) .. controls (.1,-2) and (.1,-.3) .. (.3,-.3);
 \draw [dotted, very thick] (1.3,-1.9) .. controls (1.1,-1.9) and (.9,-.2) .. (1.1,-.2);
    \end{tikzpicture}};
\endxy
\right)
\nonumber \\
&=&
A^4
\left(
\xy (0,0)*{
  \begin{tikzpicture} [scale=.3,  decoration={markings, 
                        mark=at position 0.25 with {\arrow{>}};    }]
%% Torus
\draw (-4,0) .. controls (-4,1.2) and (-2,2) .. (0,2) .. controls (2,2) and (4,1.2) .. (4,0) .. controls (4,-1.2) and (2,-2) .. (0,-2) .. controls (-2,-2) and (-4,-1.2) .. (-4,0);
\draw (-2,0.2) arc (-120:-60:4);
\draw (1.36,-0.08) arc (70:110:4);
%% Parallel curves
\draw [very thick] (.5,-2) .. controls (.9,-2) and (.7,-.3) ..(.3,-.3);
\draw [very thick] (1.3,-1.9) .. controls (1.7,-1.9) and (1.4,-.2) ..(1.1,-.2);
%% %% Dotted part
 \draw [dotted, very thick] (.5,-2) .. controls (.1,-2) and (.1,-.3) .. (.3,-.3);
 \draw [dotted, very thick] (1.3,-1.9) .. controls (1.1,-1.9) and (.9,-.2) .. (1.1,-.2);
%% Parallel curves
\draw [very thick,postaction={decorate}] (2.4,0).. controls (2.4,1.2) and (-2.4,1.2) .. (-2.4,0) .. controls (-2.4,-1.2)  and (2.4,-1.2) .. (2.4,0);
\draw [very thick,postaction={decorate}] (3.2,0).. controls (3.2,2) and (-3.2,2) .. (-3.2,0) .. controls (-3.2,-2)  and (3.2,-2) .. (3.2,0);
%% Smoothing 1
\fill[fill=white] (.83,-.85) arc (0:360:.2);
\draw [very thick] (.83,-.85) .. controls (.73,-.85) and (.63,-.95) .. (.69,-1.05);
\draw [very thick] (.42,-.89) .. controls (.53,-.85) and (.63,-.75) .. (.58,-.63);
%% Smoothing 2
\fill[fill=white] (1.61,-.75) arc (0:360:.2);
\draw [very thick] (1.61,-.69) .. controls (1.51,-.75) and (1.41,-.85) .. (1.45,-.95);
\draw [very thick] (1.2,-.79) .. controls (1.31,-.75) and (1.41,-.65) .. (1.34,-.53);
%% Smoothing 3
\fill[fill=white] (.9,-1.45) arc (0:360:.2);
\draw [very thick] (.91,-1.45) .. controls (.8,-1.45) and (.7,-1.55) .. (.71,-1.66);
\draw [very thick] (.5,-1.48) .. controls (.6,-1.48) and (.71,-1.4) .. (.71,-1.24);
%% Smoothing 4
\fill[fill=white] (1.7,-1.33) arc (0:360:.2);
\draw [very thick] (1.7,-1.3) .. controls (1.6,-1.3) and (1.49,-1.43) .. (1.49,-1.53);
\draw [very thick] (1.3,-1.38) .. controls (1.4,-1.38) and (1.47,-1.23) .. (1.47,-1.13);
\end{tikzpicture}};
\endxy
+\xy (0,0)*{
  \begin{tikzpicture} [scale=.3,  decoration={markings, 
                        mark=at position 0.25 with {\arrow{<}};    }]
%% Torus
\draw (-4,0) .. controls (-4,1.2) and (-2,2) .. (0,2) .. controls (2,2) and (4,1.2) .. (4,0) .. controls (4,-1.2) and (2,-2) .. (0,-2) .. controls (-2,-2) and (-4,-1.2) .. (-4,0);
\draw (-2,0.2) arc (-120:-60:4);
\draw (1.36,-0.08) arc (70:110:4);
%% Parallel curves
\draw [very thick] (.5,-2) .. controls (.9,-2) and (.7,-.3) ..(.3,-.3);
\draw [very thick] (1.3,-1.9) .. controls (1.7,-1.9) and (1.4,-.2) ..(1.1,-.2);
%% %% Dotted part
 \draw [dotted, very thick] (.5,-2) .. controls (.1,-2) and (.1,-.3) .. (.3,-.3);
 \draw [dotted, very thick] (1.3,-1.9) .. controls (1.1,-1.9) and (.9,-.2) .. (1.1,-.2);
%% Parallel curves
\draw [very thick,postaction={decorate}] (2.4,0).. controls (2.4,1.2) and (-2.4,1.2) .. (-2.4,0) .. controls (-2.4,-1.2)  and (2.4,-1.2) .. (2.4,0);
\draw [very thick,postaction={decorate}] (3.2,0).. controls (3.2,2) and (-3.2,2) .. (-3.2,0) .. controls (-3.2,-2)  and (3.2,-2) .. (3.2,0);
%% Smoothing 1
\fill[fill=white] (.83,-.85) arc (0:360:.2);
\draw [very thick] (.83,-.85) .. controls (.73,-.85) and (.63,-.95) .. (.69,-1.05);
\draw [very thick] (.42,-.89) .. controls (.53,-.85) and (.63,-.75) .. (.58,-.63);
%% Smoothing 2
\fill[fill=white] (1.61,-.75) arc (0:360:.2);
\draw [very thick] (1.61,-.69) .. controls (1.51,-.75) and (1.41,-.85) .. (1.45,-.95);
\draw [very thick] (1.2,-.79) .. controls (1.31,-.75) and (1.41,-.65) .. (1.34,-.53);
%% Smoothing 3
\fill[fill=white] (.9,-1.45) arc (0:360:.2);
\draw [very thick] (.91,-1.45) .. controls (.8,-1.45) and (.7,-1.55) .. (.71,-1.66);
\draw [very thick] (.5,-1.48) .. controls (.6,-1.48) and (.71,-1.4) .. (.71,-1.24);
%% Smoothing 4
\fill[fill=white] (1.7,-1.33) arc (0:360:.2);
\draw [very thick] (1.7,-1.3) .. controls (1.6,-1.3) and (1.49,-1.43) .. (1.49,-1.53);
\draw [very thick] (1.3,-1.38) .. controls (1.4,-1.38) and (1.47,-1.23) .. (1.47,-1.13);
\end{tikzpicture}};
\endxy
\right)
\nonumber \\
&+&A^{-4}
\left(
\xy (0,0)*{
  \begin{tikzpicture} [scale=.3,  decoration={markings, 
                        mark=at position 0.25 with {\arrow{>}};    }]
%% Torus
\draw (-4,0) .. controls (-4,1.2) and (-2,2) .. (0,2) .. controls (2,2) and (4,1.2) .. (4,0) .. controls (4,-1.2) and (2,-2) .. (0,-2) .. controls (-2,-2) and (-4,-1.2) .. (-4,0);
\draw (-2,0.2) arc (-120:-60:4);
\draw (1.36,-0.08) arc (70:110:4);
%% Parallel curves
\draw [very thick] (.5,-2) .. controls (.9,-2) and (.7,-.3) ..(.3,-.3);
\draw [very thick] (1.3,-1.9) .. controls (1.7,-1.9) and (1.4,-.2) ..(1.1,-.2);
%% %% Dotted part
 \draw [dotted, very thick] (.5,-2) .. controls (.1,-2) and (.1,-.3) .. (.3,-.3);
 \draw [dotted, very thick] (1.3,-1.9) .. controls (1.1,-1.9) and (.9,-.2) .. (1.1,-.2);
%% Parallel curves
\draw [very thick,postaction={decorate}] (2.4,0).. controls (2.4,1.2) and (-2.4,1.2) .. (-2.4,0) .. controls (-2.4,-1.2)  and (2.4,-1.2) .. (2.4,0);
\draw [very thick,postaction={decorate}] (3.2,0).. controls (3.2,2) and (-3.2,2) .. (-3.2,0) .. controls (-3.2,-2)  and (3.2,-2) .. (3.2,0);
%% Smoothing 1
\fill[fill=white] (.83,-.85) arc (0:360:.2);
\draw [very thick] (.83,-.85) .. controls (.73,-.85) and (.63,-.75) .. (.58,-.63);
\draw [very thick] (.42,-.89) .. controls (.53,-.85) and (.63,-.95) .. (.69,-1.05);
%% Smoothing 2
\fill[fill=white] (1.61,-.75) arc (0:360:.2);
\draw [very thick] (1.61,-.69) .. controls (1.51,-.75) and (1.41,-.65) .. (1.34,-.53);
\draw [very thick] (1.2,-.79) .. controls (1.31,-.75) and (1.41,-.85) .. (1.45,-.95);
%% Smoothing 3
\fill[fill=white] (.9,-1.45) arc (0:360:.2);
\draw [very thick] (.91,-1.45) .. controls (.8,-1.45) and (.71,-1.4) .. (.71,-1.24);
\draw [very thick] (.5,-1.48) .. controls (.6,-1.48) and (.7,-1.55) .. (.71,-1.66);
%% Smoothing 4
\fill[fill=white] (1.7,-1.33) arc (0:360:.2);
\draw [very thick] (1.7,-1.3) .. controls (1.6,-1.3)  and (1.47,-1.23) .. (1.47,-1.13);
\draw [very thick] (1.3,-1.38) .. controls (1.4,-1.38)and (1.49,-1.43) .. (1.49,-1.53);
\end{tikzpicture}};
\endxy
+\xy (0,0)*{
  \begin{tikzpicture} [scale=.3,  decoration={markings, 
                        mark=at position 0.25 with {\arrow{<}};    }]
%% Torus
\draw (-4,0) .. controls (-4,1.2) and (-2,2) .. (0,2) .. controls (2,2) and (4,1.2) .. (4,0) .. controls (4,-1.2) and (2,-2) .. (0,-2) .. controls (-2,-2) and (-4,-1.2) .. (-4,0);
\draw (-2,0.2) arc (-120:-60:4);
\draw (1.36,-0.08) arc (70:110:4);
%% Parallel curves
\draw [very thick] (.5,-2) .. controls (.9,-2) and (.7,-.3) ..(.3,-.3);
\draw [very thick] (1.3,-1.9) .. controls (1.7,-1.9) and (1.4,-.2) ..(1.1,-.2);
%% %% Dotted part
 \draw [dotted, very thick] (.5,-2) .. controls (.1,-2) and (.1,-.3) .. (.3,-.3);
 \draw [dotted, very thick] (1.3,-1.9) .. controls (1.1,-1.9) and (.9,-.2) .. (1.1,-.2);
%% Parallel curves
\draw [very thick,postaction={decorate}] (2.4,0).. controls (2.4,1.2) and (-2.4,1.2) .. (-2.4,0) .. controls (-2.4,-1.2)  and (2.4,-1.2) .. (2.4,0);
\draw [very thick,postaction={decorate}] (3.2,0).. controls (3.2,2) and (-3.2,2) .. (-3.2,0) .. controls (-3.2,-2)  and (3.2,-2) .. (3.2,0);
%% Smoothing 1
\fill[fill=white] (.83,-.85) arc (0:360:.2);
\draw [very thick] (.83,-.85) .. controls (.73,-.85) and (.63,-.75) .. (.58,-.63);
\draw [very thick] (.42,-.89) .. controls (.53,-.85) and (.63,-.95) .. (.69,-1.05);
%% Smoothing 2
\fill[fill=white] (1.61,-.75) arc (0:360:.2);
\draw [very thick] (1.61,-.69) .. controls (1.51,-.75) and (1.41,-.65) .. (1.34,-.53);
\draw [very thick] (1.2,-.79) .. controls (1.31,-.75) and (1.41,-.85) .. (1.45,-.95);
%% Smoothing 3
\fill[fill=white] (.9,-1.45) arc (0:360:.2);
\draw [very thick] (.91,-1.45) .. controls (.8,-1.45) and (.71,-1.4) .. (.71,-1.24);
\draw [very thick] (.5,-1.48) .. controls (.6,-1.48) and (.7,-1.55) .. (.71,-1.66);
%% Smoothing 4
\fill[fill=white] (1.7,-1.33) arc (0:360:.2);
\draw [very thick] (1.7,-1.3) .. controls (1.6,-1.3)  and (1.47,-1.23) .. (1.47,-1.13);
\draw [very thick] (1.3,-1.38) .. controls (1.4,-1.38)and (1.49,-1.43) .. (1.49,-1.53);
\end{tikzpicture}};
\endxy
\right)
%\label{eq:Product22T}
\nonumber
\end{eqnarray}

One of the intermediate terms is as follows and the three other ones are similar.
\begin{equation}
\xy (0,0)*{
  \begin{tikzpicture} [scale=.5,  decoration={markings, 
                        mark=at position 0.95 with {\arrow{>}};    }]
%% Torus
\draw (-4,0) .. controls (-4,1.2) and (-2,2) .. (0,2) .. controls (2,2) and (4,1.2) .. (4,0) .. controls (4,-1.2) and (2,-2) .. (0,-2) .. controls (-2,-2) and (-4,-1.2) .. (-4,0);
\draw (-2,0.2) arc (-120:-60:4);
\draw (1.36,-0.08) arc (70:110:4);
%% Parallel curves
\draw [very thick,postaction={decorate}] (.5,-2) .. controls (.9,-2) and (.7,-.3) ..(.3,-.3);
\draw [very thick,postaction={decorate}] (1.3,-1.9) .. controls (1.7,-1.9) and (1.4,-.2) ..(1.1,-.2);
%% %% Dotted part
 \draw [dotted, very thick] (.5,-2) .. controls (.1,-2) and (.1,-.3) .. (.3,-.3);
 \draw [dotted, very thick] (1.3,-1.9) .. controls (1.1,-1.9) and (.9,-.2) .. (1.1,-.2);
%% Parallel curves
\draw [thick, double, color= white, double=black,double distance=1.25pt] (2.4,0).. controls (2.4,1.2) and (-2.4,1.2) .. (-2.4,0) .. controls (-2.4,-1.2)  and (2.4,-1.2) .. (2.4,0);
\draw [thick, double, color= white, double=black,double distance=1.25pt] (3.2,0).. controls (3.2,2) and (-3.2,2) .. (-3.2,0) .. controls (-3.2,-2)  and (3.2,-2) .. (3.2,0);
\draw [very thick, postaction={decorate}] (.1,1.48)-- (0,1.48);
\draw [very thick, postaction={decorate}] (.1,.88)-- (0,.88);
    \end{tikzpicture}};
\endxy
\quad = \quad
A^4
\;
\xy (0,0)*{
  \begin{tikzpicture} [scale=.5,  decoration={markings, 
                        mark=at position 0.25 with {\arrow{>}};    }]
%% Torus
\draw (-4,0) .. controls (-4,1.2) and (-2,2) .. (0,2) .. controls (2,2) and (4,1.2) .. (4,0) .. controls (4,-1.2) and (2,-2) .. (0,-2) .. controls (-2,-2) and (-4,-1.2) .. (-4,0);
\draw (-2,0.2) arc (-120:-60:4);
\draw (1.36,-0.08) arc (70:110:4);
%% Parallel curves
\draw [very thick] (.5,-2) .. controls (.9,-2) and (.7,-.3) ..(.3,-.3);
\draw [very thick] (1.3,-1.9) .. controls (1.7,-1.9) and (1.4,-.2) ..(1.1,-.2);
%% %% Dotted part
 \draw [dotted, very thick] (.5,-2) .. controls (.1,-2) and (.1,-.3) .. (.3,-.3);
 \draw [dotted, very thick] (1.3,-1.9) .. controls (1.1,-1.9) and (.9,-.2) .. (1.1,-.2);
%% Parallel curves
\draw [very thick,postaction={decorate}] (2.4,0).. controls (2.4,1.2) and (-2.4,1.2) .. (-2.4,0) .. controls (-2.4,-1.2)  and (2.4,-1.2) .. (2.4,0);
\draw [very thick,postaction={decorate}] (3.2,0).. controls (3.2,2) and (-3.2,2) .. (-3.2,0) .. controls (-3.2,-2)  and (3.2,-2) .. (3.2,0);
%% Smoothing 1
\fill[fill=white] (.83,-.85) arc (0:360:.2);
\draw [very thick] (.83,-.85) .. controls (.73,-.85) and (.63,-.95) .. (.69,-1.05);
\draw [very thick] (.42,-.89) .. controls (.53,-.85) and (.63,-.75) .. (.58,-.63);
%% Smoothing 2
\fill[fill=white] (1.61,-.75) arc (0:360:.2);
\draw [very thick] (1.61,-.69) .. controls (1.51,-.75) and (1.41,-.85) .. (1.45,-.95);
\draw [very thick] (1.2,-.79) .. controls (1.31,-.75) and (1.41,-.65) .. (1.34,-.53);
%% Smoothing 3
\fill[fill=white] (.9,-1.45) arc (0:360:.2);
\draw [very thick] (.91,-1.45) .. controls (.8,-1.45) and (.7,-1.55) .. (.71,-1.66);
\draw [very thick] (.5,-1.48) .. controls (.6,-1.48) and (.71,-1.4) .. (.71,-1.24);
%% Smoothing 4
\fill[fill=white] (1.7,-1.33) arc (0:360:.2);
\draw [very thick] (1.7,-1.3) .. controls (1.6,-1.3) and (1.49,-1.43) .. (1.49,-1.53);
\draw [very thick] (1.3,-1.38) .. controls (1.4,-1.38) and (1.47,-1.23) .. (1.47,-1.13);
\end{tikzpicture}};
\endxy
\nonumber
\end{equation}
\end{example}

% ===============================================================================
%
\section{Reproving Frohman-Gelca formula} \label{subsection:reproof}
%
% ===============================================================================

%
%
\subsection{Technical result}

Denote by $\Theta$ the $R$-module map on $\Aa$ that reverses orientation of generators, so $\Theta(\gamma_{(np,nq)})=\gamma_{(-np,-nq)}$. For a general curve, this corresponds to reversing the orientations of non-trivial components. Denote by $\Aa^{\Theta}$ the part of $\Aa$ that is stable under $\Theta$. $\Theta$ is a map of algebras, and $\Aa^{\Theta}$ is a sub-algebra of $\Aa$, whose basis is given by the symmetrization of the original basis of $\Aa$: 
\[
\{\gamma_{(np,nq)}+\gamma_{(-np,-nq)}\;|\; n>0, (p,q)\in (\Z^{>0}\times \Z \cup \{0\}\times \Z^{>0})\}\cup \{1\}.
\]

Consider the map $\psi\ :\ Sk(\tor)\rightarrow \Aa^{\Theta}$ that to a simple multicurve $\gamma$ associates the sum of all possible orientations of this curve. We will prove that this natural map is in fact an algebra isomorphism in Theorem \ref{THM_iso} below. 
This result is the technical heart of the relation between $Sk(\tor)$ and our oriented version $\Aa$ and does not depend on the use of Chebyshev polynomials. The technicality of Frohman-Gelca's proof is shifted, in our context, to this theorem, while the role of the Chebyshev polynomials will appear entirely inside $\Aa$. Note that the following Theorem \ref{THM_iso} is very closely related to results of Sallenave \cite{MR1834496}.

\begin{theorem} \label{THM_iso}
$\psi: Sk(\tor)\rightarrow \Aa^{\Theta}$ is an isomorphism of $R$-algebras. 
\end{theorem}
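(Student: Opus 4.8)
The plan is to treat separately the fact that $\psi$ is bijective as a map of $R$-modules and the fact that it respects products: the first is an elementary computation with binomial coefficients, the second is the diagrammatic core of the statement and, as the excerpt already indicates, is where the technical weight of Frohman--Gelca's argument becomes concentrated in our set-up.

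First I would evaluate $\psi$ on the given $R$-basis $\{(np,nq)\}_{n\ge 1}\cup\{\emptyset\}$ of $Sk(\tor)$. Since $(np,nq)$ is $n$ parallel copies of the primitive $(p,q)$-curve, orienting the copies in all $2^{n}$ ways and then cancelling adjacent antiparallel pairs in $\Aa$ by Relations \ref{relA_circ}--\ref{relA_Reid} gives
\[
\psi\big((np,nq)\big)=\sum_{\substack{0<j\le n\\ j\equiv n\ (2)}}\binom{n}{(n+j)/2}\big(\gamma_{(jp,jq)}+\gamma_{(-jp,-jq)}\big)+[\,n\text{ even}\,]\binom{n}{n/2}.
\]
The leading term ($j=n$) is $\gamma_{(np,nq)}+\gamma_{(-np,-nq)}$ with coefficient $1$, $\psi$ does not mix distinct primitive directions, and $\psi(\emptyset)=1$; hence, ordering both the given basis of $Sk(\tor)$ and the symmetrized basis of $\Aa^{\Theta}$ by the integer $n$, the matrix of $\psi$ is unitriangular over $\Z\subseteq R$, so $\psi$ is an isomorphism of $R$-modules. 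No Chebyshev polynomials are needed for this part.

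The main work is multiplicativity. Place two multicurves $\alpha$ (above) and $\beta$ (below) in generic position and compute $\psi(\alpha\ast\beta)$ and $\psi(\alpha)\ast\psi(\beta)$ by resolving the crossings of $\alpha\ast\beta$ in two ways. On one side, Kauffman's rules \ref{KauffBrSmoothing}--\ref{KauffBrCircle} give $\alpha\ast\beta=\sum_{s}A^{\sigma(s)}[D_{s}]$ in $Sk(\tor)$ (with $\sigma(s)$ the number of $A$-smoothings minus the number of $A^{-1}$-smoothings), and since a null-homotopic circle contributes $-A^{2}-A^{-2}$ one gets $\psi(\alpha\ast\beta)=\sum_{s}A^{\sigma(s)}\sum_{o}\langle D_{s},o\rangle$, where the inner sum runs over all orientations $o$ of the resolution $D_{s}$ and $\langle D_{s},o\rangle$ denotes the image of the oriented multicurve in $\Aa$. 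On the other side, $\psi(\alpha)\ast\psi(\beta)$ is the sum over all orientations $o_{\alpha}$ of $\alpha$ and $o_{\beta}$ of $\beta$ of the product of the corresponding oriented multicurves; each crossing now has a unique oriented smoothing, whose coefficient $A^{\pm1}$ agrees with the Kauffman coefficient of that smoothing (this is exactly what comparison of \ref{def:AaSmooth} with \ref{KauffBrSmoothing} shows), so $\psi(\alpha)\ast\psi(\beta)=\sum_{o_{\alpha},o_{\beta}}A^{\sigma(s(o_{\alpha},o_{\beta}))}\langle D_{s(o_{\alpha},o_{\beta})},\hat o(o_{\alpha},o_{\beta})\rangle$. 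Un-smoothing each crossing along its orientation identifies the pairs $(o_{\alpha},o_{\beta})$ bijectively with the pairs $(s,o)$ that are \emph{compatible}, meaning that at every crossing $o$ restricts to an orientation extending across the crossing; equivalently, at every crossing the two ends of the over-arc receive opposite in/out signs.

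It remains to show that the remaining, \emph{incompatible}, pairs contribute $0$ to $\psi(\alpha\ast\beta)$, and this is the delicate point where the relations of $\Aa$ beyond circle-removal are genuinely used. A pair $(s,o)$ is incompatible exactly when, at some crossing $c$, $o$ restricts to the pattern in which the two over-arc ends agree; this pattern is admissible for \emph{both} smoothings of $c$ and locally makes the two arcs antiparallel. Flipping the smoothing at the first incompatible crossing (for a fixed ordering of crossings) is a fixed-point-free involution on the incompatible pairs, and by Relation \ref{relA_Reid} — together with \ref{relA_circ} evaluating the null-homotopic circle it produces, whose $-A^{\pm2}$ exactly offsets the change of $\sigma$ by $\mp 2$ — the two members of each orbit cancel. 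Thus $\psi(\alpha\ast\beta)=\psi(\alpha)\ast\psi(\beta)$, and being also an $R$-module isomorphism, $\psi$ is an isomorphism of $R$-algebras. The main obstacle is precisely this cancellation: one must check carefully that the local oriented pictures produced by flipping an incompatible smoothing are the ones appearing in Relation \ref{relA_Reid} and that the induced powers of $A$ cancel rather than merely shift; this step is close in spirit to Sallenave's arguments in \cite{MR1834496}.
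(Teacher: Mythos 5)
Your module-isomorphism step and your identification of the compatible pairs $(s,o)$ with the orientation pairs $(o_\alpha,o_\beta)$ are both correct (the binomial/unitriangularity computation is in fact more explicit than what the paper records), and up to that point your argument runs parallel to the paper's. The gap is in the cancellation of the incompatible pairs. You claim that flipping the smoothing at the first incompatible crossing produces, via Relations \ref{relA_Reid} and \ref{relA_circ}, a factor $-A^{\pm 2}$ that ``exactly offsets'' the change of $\sigma$ by $\mp 2$. This is false for half of the incompatible local patterns. At an incompatible crossing there are \emph{two} possible in/out patterns on the four ends (over-arc ends both ``in'', or both ``out''), and the orientation of the null-homotopic circle created by Relation \ref{relA_Reid} is determined by which pattern occurs, \emph{not} by the direction in which you flip the smoothing. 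In the paper's notation these are the pairs $(X_4,X_8)$ and $(X_2,X_6)$: Equation \ref{eq:star} gives $AX_4=-A^{-1}X_8$, so that pattern does cancel locally exactly as you describe; but for the other pattern the circle carries the opposite weight, so $AX_2=-A^{3}X_6$, and the two members of your orbit contribute $(A^{\sigma-2}-A^{\sigma+2})\langle D_{s'},o'\rangle\neq 0$. Your involution therefore fails to kill the $X_2/X_6$-type terms, which is precisely the hard part of the theorem.

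To close this gap one needs a genuinely global argument, and this is where the paper spends most of its effort: it shows that any globally valid term which is of type $X_2$ (or $X_6$) at a given crossing must contain, at some \emph{other} crossing, a configuration of type $X_4$ or $X_8$, found by following the outgoing strand and recreating crossings as in Figure \ref{recreate}; the pairwise cancellation is then performed at that other crossing using Equation \ref{eq:star}. Crucially, the existence of such a crossing uses that $\gamma_1$ and $\gamma_2$ are embedded multicurves with no self-intersections --- a topological input entirely absent from your argument. (A sanity check that something global must be used: the assignment of oriented diagrams to $\Aa$ is not invariant under the framed Reidemeister I move, so a purely local, crossing-by-crossing cancellation scheme that never invokes embeddedness cannot suffice.)
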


\begin{proof}
It is straightforward to check that $\psi$ gives a well-defined map of $R$-modules, and this is an isomorphism. Indeed, the image of the usual basis of $Sk(\tor)$ is a basis for $\Aa^{\Theta}$. It remains to check that this gives a map of $R$-algebras. Let $\gamma_1$ and $\gamma_2$ be two basis elements of $Sk(\tor)$ which are in generic position. Our goal is to show $\psi(\gamma_1\ast \gamma_2)-\psi(\gamma_1)\psi(\gamma_2) = 0$  thus verifying that the map $\psi$ preserves the algebra structure. 

Say there are $k$ crossings in $\gamma_1\ast \gamma_2$. After performing unoriented smoothings of all crossings in $Sk(\tor)$, there are $2^k$ terms in $\gamma_1\ast \gamma_2$ . At each crossing, there are four possible local orientations, so there is a maximum of $4^k$ possible orientations for each term of the product $\gamma_1\ast \gamma_2$. However, not every choice of local orientation at each crossing corresponds to a valid global orientation, so the total number of possible orientations of a term of $\gamma_1\ast \gamma_2$ may be less than $4^k$.  (This happens, for instance, when there are two crossings between two connected components in a smoothing.) 

Before smoothing $\gamma_1\ast \gamma_2$, each crossing has two possible local orientations. Thus, there is a maximum of $2^k$ terms in $\psi(\gamma_1)\psi(\gamma_2)$. Again, not every possible collection of local orientations necessarily forms a valid global orientation, so this maximum may not be achieved. In $\Aa$, each oriented crossing has a unique smoothing, so, after smoothing, the product $\psi(\gamma_1)\psi(\gamma_2)$ still has a maximum of $2^k$ terms.

Isolate some crossing in $\gamma_1\ast \gamma_2$. Then the following figure summarizes the above discussion. While some local orientation choices do not make sense globally, they will cancel pairwise in our argument anyway.

\begin{eqnarray}
\gamma_1\ast \gamma_2\; &=& \quad
\xy
(0,0)*{
\begin{tikzpicture} 
\draw[very thick] (0,-.5) -- (0,.5);
\draw[draw =white, double=black, ultra thick , double distance=1.25pt] (-.5,0) -- (.5,0);
\end{tikzpicture}
};
\endxy
\quad = \quad
A\;
\xy
(0,0)*{
\begin{tikzpicture} 
\draw[very thick] (0,-.5) .. controls (0,0) .. (.5,0);
\draw[very thick] (-.5,0) .. controls (0,0) .. (0,.5);
\end{tikzpicture}
};
\endxy
\;+\;
A^{-1}\;
\xy
(0,0)*{
\begin{tikzpicture} 
\draw[very thick] (0,-.5) .. controls (0,0) .. (-.5,0);
\draw[very thick] (.5,0) .. controls (0,0) .. (0,.5);
\end{tikzpicture}
};
\endxy \nonumber \\
\psi(\gamma_1\ast \gamma_2)&=&
A\;
\xy
(0,0)*{
\begin{tikzpicture} 
\draw[very thick, ->] (0,-.5) .. controls (0,0) .. (.5,0);
\draw[very thick, ->] (-.5,0) .. controls (0,0) .. (0,.5);
\end{tikzpicture}
};
\endxy
\;+\;
A\;
\xy
(0,0)*{
\begin{tikzpicture} 
\draw[very thick, <-] (0,-.5) .. controls (0,0) .. (.5,0);
\draw[very thick, ->] (-.5,0) .. controls (0,0) .. (0,.5);
\end{tikzpicture}
};
\endxy
\;+\;
A\;
\xy
(0,0)*{
\begin{tikzpicture} 
\draw[very thick, <-] (0,-.5) .. controls (0,0) .. (.5,0);
\draw[very thick, <-] (-.5,0) .. controls (0,0) .. (0,.5);
\end{tikzpicture}
};
\endxy
\;+\;
A\;
\xy
(0,0)*{
\begin{tikzpicture} 
\draw[very thick, ->] (0,-.5) .. controls (0,0) .. (.5,0);
\draw[very thick, <-] (-.5,0) .. controls (0,0) .. (0,.5);
\end{tikzpicture}
};
\endxy
\nonumber \\ 
&+&
A^{-1}\;
\xy
(0,0)*{
\begin{tikzpicture} 
\draw[very thick,<-] (0,-.5) .. controls (0,0) .. (-.5,0);
\draw[very thick,<-] (.5,0) .. controls (0,0) .. (0,.5);
\end{tikzpicture}
};
\endxy
\;+\;
A^{-1}\;
\xy
(0,0)*{
\begin{tikzpicture} 
\draw[very thick,<-] (0,-.5) .. controls (0,0) .. (-.5,0);
\draw[very thick,->] (.5,0) .. controls (0,0) .. (0,.5);
\end{tikzpicture}
};
\endxy
\;+\;
A^{-1}\;
\xy
(0,0)*{
\begin{tikzpicture} 
\draw[very thick,->] (0,-.5) .. controls (0,0) .. (-.5,0);
\draw[very thick,->] (.5,0) .. controls (0,0) .. (0,.5);
\end{tikzpicture}
};
\endxy
\;+\;
A^{-1}\;
\xy
(0,0)*{
\begin{tikzpicture} 
\draw[very thick,->] (0,-.5) .. controls (0,0) .. (-.5,0);
\draw[very thick,<-] (.5,0) .. controls (0,0) .. (0,.5);
\end{tikzpicture}
};
\endxy
\nonumber \\
&=& AX_1 + AX_2 + AX_3 + AX_4 + A^{-1}X_5+ A^{-1}X_6+ A^{-1}X_7+ A^{-1}X_8 
 \nonumber \\
\psi(\gamma_1)\psi(\gamma_2)&=&
\quad
\xy
(0,0)*{
\begin{tikzpicture} 
\draw [very thick] (0,-.5) -- (0,-.1);
\draw[very thick,->] (0,.1) -- (0,.5);
\draw[very thick, ->] (-.5,0) -- (.5,0);
\end{tikzpicture}
};
\endxy
\;+\;
\xy
(0,0)*{
\begin{tikzpicture} 
\draw [very thick,<-] (0,-.5) -- (0,-.1);
\draw[very thick] (0,.1) -- (0,.5);
\draw[very thick, <-] (-.5,0) -- (.5,0);
\end{tikzpicture}
};
\endxy
\;+\;
\xy
(0,0)*{
\begin{tikzpicture} 
\draw [very thick,<-] (0,-.5) -- (0,-.1);
\draw[very thick] (0,.1) -- (0,.5);
\draw[very thick, ->] (-.5,0) -- (.5,0);
\end{tikzpicture}
};
\endxy
\;+\;
\xy
(0,0)*{
\begin{tikzpicture} 
\draw [very thick] (0,-.5) -- (0,-.1);
\draw[very thick,->] (0,.1) -- (0,.5);
\draw[very thick, <-] (-.5,0) -- (.5,0);
\end{tikzpicture}
};
\endxy
\nonumber \\
&=& \quad
A\;
\xy
(0,0)*{
\begin{tikzpicture} 
\draw[very thick, ->] (0,-.5) .. controls (0,0) .. (.5,0);
\draw[very thick, ->] (-.5,0) .. controls (0,0) .. (0,.5);
\end{tikzpicture}
};
\endxy
\;+\;
A\;
\xy
(0,0)*{
\begin{tikzpicture} 
\draw[very thick, <-] (0,-.5) .. controls (0,0) .. (.5,0);
\draw[very thick, <-] (-.5,0) .. controls (0,0) .. (0,.5);
\end{tikzpicture}
};
\endxy
\;+\;
A^{-1}\;
\xy
(0,0)*{
\begin{tikzpicture} 
\draw[very thick,<-] (0,-.5) .. controls (0,0) .. (-.5,0);
\draw[very thick,<-] (.5,0) .. controls (0,0) .. (0,.5);
\end{tikzpicture}
};
\endxy
\;+\;
A^{-1}\;
\xy
(0,0)*{
\begin{tikzpicture} 
\draw[very thick,->] (0,-.5) .. controls (0,0) .. (-.5,0);
\draw[very thick,->] (.5,0) .. controls (0,0) .. (0,.5);
\end{tikzpicture}
};
\endxy \nonumber \\
&=& AX_1 +AX_3+A^{-1}X_5+ A^{-1} X_7 \nonumber
\end{eqnarray}

Keeping the isolated crossing from above, fix a smoothing and orientation for every other crossing in $\gamma_1 \ast \gamma_2$. Then there will be 8 terms in  $\psi(\gamma_1 \ast \gamma_2)$ with these fixed choices: one corresponding to each of the eight smoothing and orientation choices $X_1, \ldots, X_8$ at the isolated crossing.  Notice that, for fixed choices elsewhere, the term $X_4$ exists (meaning local choices lead to a valid global orientation) if and only if the term $X_8$ exists. Further, notice that $AX_4 = -A^{-1} X_8$ by Relations \ref{relA_circ} and \ref{relA_Reid}. Equation \ref{eq:star} shows this calculation pictorially. 

\begin{eqnarray}  \label{eq:star}
AX_4 = A\;
\xy
(0,0)*{
\begin{tikzpicture} 
\draw[very thick, ->] (0,-.5) .. controls (0,0) .. (.5,0);
\draw[very thick, <-] (-.5,0) .. controls (0,0) .. (0,.5);
\end{tikzpicture}
};
\endxy
&=&
A\;
\xy
(0,0)*{
\begin{tikzpicture} 
\draw[very thick, ->] (0,-.5) .. controls (0,-.3) and (-.3,0) .. (-.5,0);
\draw[very thick, ->] (0,.5) .. controls (0,.3) and (.3,0)  .. (.5,0);
\draw [very thick,->] (.15,0) arc (0:360:.15); 
\end{tikzpicture}
};
\endxy
\quad = \quad
-A^{-1}\;
\xy
(0,0)*{
\begin{tikzpicture} 
\draw[very thick, ->] (0,-.5) .. controls (0,0) .. (-.5,0);
\draw[very thick, ->] (0,.5) .. controls (0,0)  .. (.5,0); 
\end{tikzpicture}
};
\endxy = -A^{-1}X_8
\end{eqnarray}

This means that $\psi(\gamma_1 \ast \gamma_2)$ is a sum over all possible choices away from the isolated crossing of terms of the form $A(X_1 + X_2 + X_3) + A^{-1}(X_5+ X_6+ X_7)$. Since $\psi(\gamma_1) \psi(\gamma_2)$ sums terms of the form $A(X_1 + X_3) + A^{-1}(X_5+ X_7)$ over the same set, we conclude that $\psi(\gamma_1 \ast\gamma_2) - \psi(\gamma_1) \psi(\gamma_2)$ reduces to a sum of terms of the form $AX_2 + A^{-1}X_6$ as shown in Equation \ref{reduct}. Our remaining work, then, is to prove that this sum is 0.

\begin{eqnarray}\label{reduct}
\psi(\gamma_1\ast \gamma_2)-\psi(\gamma_1)\psi(\gamma_2)&=&
A\;
\xy
(0,0)*{
\begin{tikzpicture} 
\draw[very thick, <-] (0,-.5) .. controls (0,0) .. (.5,0);
\draw[very thick, ->] (-.5,0) .. controls (0,0) .. (0,.5);
\end{tikzpicture}
};
\endxy
\;+\;
A^{-1}\;
\xy
(0,0)*{
\begin{tikzpicture} 
\draw[very thick,<-] (0,-.5) .. controls (0,0) .. (-.5,0);
\draw[very thick,->] (.5,0) .. controls (0,0) .. (0,.5);
\end{tikzpicture}
};
\endxy = AX_2 + A^{-1}X_6
\end{eqnarray}

Suppose there exists a term in $\psi(\gamma_1\ast \gamma_2)$ at which the isolating crossing looks like $X_2$. (The same argument will work in the case that the isolated crossing looks like $X_6$.) Again, the fact that this term exists means local smoothing and orientation choices elsewhere are compatible with $X_2$ at the isolated crossing. Follow the strand exiting the isolated crossing downwards. If the strand connects to a former crossing that looks like $X_3$ or $X_5$, recreate the crossing and continue following the strand downwards. This is shown in Figure \ref{recreate}. This shows that terms that look like $X_2$ at the isolated crossing cancel pairwise as do terms that look like $X_6$, and we conclude that $\psi(\gamma_1 \ast\gamma_2) - \psi(\gamma_1) \psi(\gamma_2) = 0$ as desired.

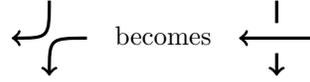
\begin{figure}[h] 
\[
\xy
(0,0)*{
\begin{tikzpicture} 
\draw[very thick, <-] (0,-.5) .. controls (0,0) .. (.5,0);
\draw[very thick, <-] (-.5,0) .. controls (0,0) .. (0,.5);
\end{tikzpicture}
};
\endxy
\quad
\text{becomes}
\quad
\xy
(0,0)*{
\begin{tikzpicture} 
\draw[very thick, ->] (.5,0) -- (-.5,0);
\draw [very thick, <-] (0, -.5) -- (0, -.2);
\draw[very thick] (0,.2) -- (0,.5);
\end{tikzpicture}
};
\endxy
\]
\caption{Recreating a crossing}\label{recreate}
\end{figure}

Continuing this process, one of two things happens. Either we eventually reach a former crossing that looks like $X_4$ or $X_8$ or we continue the crossing recovery process eventually leading back to the isolated crossing. In the latter case, the strand exiting the isolated crossing downwards must reconnect at the right or left since the term was globally oriented (and the recovery of crossings did not modify this orientation). This would mean that the isolated crossing originally came from a curve on $\tor$ intersecting itself. The curves $\gamma_1$ and $\gamma_2$ were skein elements, so self intersections are not possible.

This means that any term that looks like $X_2$ at the isolated crossing has another crossing that looks like $X_4$ or $X_8$. In fact, if $X_4$ at this other crossing leads to a valid global orientation, then so does $X_8$. This means that there is a pair of terms that are identical except that one looks like $X_4$ and one looks like $X_8$. Since the curves $\gamma_1$ and $\gamma_2$ are in generic position, the $X_4$ and $X_8$ terms will cancel by the calculation shown in Equation \ref{eq:star}.

\end{proof}

Example \ref{ex:ProofTechTh} illustrates the crossing recovery process described in the previous proof. In this example, the isolated crossing, which is marked with a solid circle, looks like $X_2$. Following the downward exiting strand, we encounter a crossing that looks like $X_5$, recover it, and then proceed. The next crossing, which is marked with a dotted circle,  looks like $X_8$.

\begin{example} \label{ex:ProofTechTh}

\[
\xy (0,0)*{
  \begin{tikzpicture} [scale=1,  decoration={markings, 
                        mark=at position 0.6 with {\arrow{>}};    }]
%% Torus
\draw (-4,0) .. controls (-4,1.2) and (-2,2) .. (0,2) .. controls (2,2) and (4,1.2) .. (4,0) .. controls (4,-1.2) and (2,-2) .. (0,-2) .. controls (-2,-2) and (-4,-1.2) .. (-4,0);
\draw (-1.51,-0.04) arc (-112:-68:4);
\draw (1.36,-0.08) arc (70:110:4);
%% First lower curve
\draw [very thick] (0,-.3) .. controls (.2,-.3) and (.2,-1) .. (.4,-1) .. controls (1.4,-1) and (2.8,-.6) .. (2.8,0) .. controls (2.8,1.6) and (-3,1.6) .. (-3,0) .. controls (-3,-1.8)  and (3.2,-1.8) .. (3.2,0) .. controls (3.2,2) and (-3.4,2) .. (-3.4,0) .. controls (-3.4,-1.4) and (-1,-1.6) .. (-.8,-1.6) .. controls (-.6,-1.6) and (-.6,-2).. (-.4,-2);
\draw [dotted, very thick] (-.4,-2) .. controls (-.2,-2) and (-.2,-.3) .. (0,-.3);
%% Parallel inferior curve
\draw [very thick] (-.2,-.3) .. controls (0,-.3) and (0,-1.2) .. (.2,-1.2) .. controls (1.2,-1.2) and (3,-.8) .. (3,0) .. controls (3,1.8) and (-3,1.9) .. (-3.2,0) .. controls (-3.2,-2)  and (3.4,-2) .. (3.4,0) .. controls (3.4,2.2) and (-3.6,2.2) .. (-3.6,0) .. controls (-3.6,-1.6) and (-1,-1.8) .. (-1,-1.8) .. controls (-.8,-1.8) and (-.8,-2).. (-.6,-2);
\draw [dotted, very thick] (-.6,-2) .. controls (-.4,-2) and (-.4,-.3) .. (-.2,-.3);
%% First higher curve
\draw [draw =white, double=black, thick , double distance=1.25pt] (1.7,0) .. controls (1.7,.6) and (-1.7,.6) .. (-1.7,0) .. controls (-1.7,-.7) and (1.7,-.7) .. (1.7,0);  
%% Parallel copy
\draw [draw =white, double=black, thick, double distance=1.25pt] (1.9,0) .. controls (1.9,.8) and (-1.9,.8) .. (-1.9,0) .. controls (-1.9,-.9) and (1.9,-.9) .. (1.9,0);  
%% Parallel copy
\draw [draw =white, double=black, thick, double distance=1.25pt] (2.1,0) .. controls (2.1,1) and (-2.1,1) .. (-2.1,0) .. controls (-2.1,-1.1) and (2.1,-1.1) .. (2.1,0);  
%% Rectangle to extract
\draw (-.4,-.4) rectangle (.5,-.9);
    \end{tikzpicture}};
\endxy
\]
gives for example:

\[
\xy (0,0)*{
  \begin{tikzpicture} [scale=1]
%% Rectangle
\draw (-2,-1) rectangle (2,1);
%% Curves
\draw [very thick, ->] (-1,1) .. controls (-1,.5) .. (-2,.5);
\draw [very thick, ->] (-2,0) .. controls (-1,0) .. (-1,.25) .. controls (-1,.5) .. (0,.5) .. controls (1,.5) .. (1,1);
\draw [very thick, ->] (-2,-.5) .. controls (-1,-.5) .. (-1,-.25) .. controls (-1,0) .. (0,0) .. controls (1,0) .. (1,-.25) .. controls (1,-.5) .. (2,-.5);
\draw [very thick, ->] (2,.5) .. controls (1,.5) .. (1,.25) .. controls (1,0) .. (2,0);
\draw [very thick, ->] (1,-1) .. controls (1,-.5) .. (0,-.5) .. controls (-1,-.5) .. (-1,-1);
%% circle
\draw (1.25,.5) arc (0:360:.25);
    \end{tikzpicture}};
\endxy
\]
coming from:

\[
\xy (0,0)*{
  \begin{tikzpicture} [scale=1]
%% Rectangle
\draw (-2,-1) rectangle (2,1);
%% Curves
\draw [very thick, ->] (-1,1) .. controls (-1,.5) .. (-2,.5);
\draw [very thick, ->] (-2,-.5) .. controls (-1,-.5) .. (-1,0) .. controls (-1,.5) .. (0,.5) .. controls (1,.5) .. (1,1);
\draw [very thick, ->] (2,.5) .. controls (1,.5) .. (1,0) .. controls (1,-.5) .. (2,-.5);
\draw [very thick, ->] (1,-1) .. controls (1,-.5) .. (0,-.5) .. controls (-1,-.5) .. (-1,-1);
\draw [draw =white, double=black, thick, double distance=1.25pt] (-2,0) -- (1.8,0);
\draw [very thick, ->] (1.8,0) -- (2,0);
%% circles
\draw (1.25,.5) arc (0:360:.25);
\draw[dotted] (1.25,-.5) arc (0:360:.25);
    \end{tikzpicture}};
\endxy
\]

\end{example}

\subsection{An alternative proof of the Frohman-Gelca formula}

With Theorem \ref{THM_iso} established, the Frohman-Gelca formula can now be proven via a straightforward calculation which highlights the importance of Chebyshev polynomials. This is accomplished by passing computations in $Sk(\tor)$ where crossings have multiple smoothings and thus products are complicated to $\Aa^{\Theta}$ where each crossing has a unique smoothing and thus products are much more tractable. 

Let $T_n$ be the $n^{th}$ Chebyshev polynomial of the first kind; it is well-known that $T_n(x+x^{-1})=x^{n}+x^{-n}$. This is a key property we will make use of in our proof of the Frohman-Gelca formula. Consider a basis element $(a,b)$ in $Sk(\tor)$ with $n=GCD(a,b)$. Recall that we have defined $(a,b)_T = T_n\left( \frac{a}{n}, \frac{b}{n} \right)$. With this information in mind, we offer the following oriented skein module proof of the Frohman-Gelca formula.

\begin{proof}[Proof of Theorem \ref{theorem::FG_Formula}]

Let $n=GCD(a,b)$ and let $m=GCD(c,d)$. The image under $\psi$ of $(a,b)_T \ast (c,d)_T$ can be computed as follows.
\begin{align*} 
\psi((a,b)_T \ast (c,d)_T) &= \psi((a,b)_T)\psi((c,d)_T) = \psi\left( T_n\left( \frac{a}{n}, \frac{b}{n} \right) \right)  \psi \left( T_m\left(  \frac{a}{m}, \frac{b}{m} \right)\right)\\
&= T_n\left( \gamma_{\frac{a}{n}, \frac{b}{n}} + \gamma^{-1}_{\frac{a}{n}, \frac{b}{n}}  \right)  T_m\left(  \gamma_{\frac{c}{m}, \frac{d}{m}} + \gamma^{-1}_{\frac{c}{m}, \frac{d}{m}}\right)\\
&=\left( \left(\gamma_{\frac{a}{n}, \frac{b}{n}}\right) ^n+ \left( \gamma_{\frac{a}{n}, \frac{b}{n}}\right)^{-n} \right) \left( \left(\gamma_{\frac{c}{m}, \frac{d}{m}}\right) ^m+ \left( \gamma_{\frac{c}{m}, \frac{d}{m}}\right)^{-m} \right) \\
&=(\gamma_{a,b} + \gamma_{-a,-b})(\gamma_{c,d} +\gamma_{-c,-d})\\
&= A^{\tiny{{ }^{\setlength{\arraycolsep}{1pt} \begin{vmatrix}
a & c \\
b & d
\end{vmatrix}}}} (\gamma_{a-c,b-d}+\gamma_{-a+c,-b+d}) 
 +A^{\tiny{{ }^{-\setlength{\arraycolsep}{1pt} \begin{vmatrix}
a & c \\
b & d
\end{vmatrix}}}} (\gamma_{a+c,b+d}+\gamma_{-a-c,-b-d})\\
&= A^{\tiny{{ }^{\setlength{\arraycolsep}{1pt} \begin{vmatrix}
a & c \\
b & d
\end{vmatrix}}}} \psi((a-c,b-d)_T)
 +A^{\tiny{{ }^{-\setlength{\arraycolsep}{1pt} \begin{vmatrix}
a & c \\
b & d
\end{vmatrix}}}} \psi((a+c,b+d)_T)\\ 
&= \psi \left(A^{\tiny{{ }^{\setlength{\arraycolsep}{1pt} \begin{vmatrix}
a & c \\
b & d
\end{vmatrix}}}} (a-c,b-d)_T
 +A^{\tiny{{ }^{-\setlength{\arraycolsep}{1pt} \begin{vmatrix}
a & c \\
b & d
\end{vmatrix}}}} (a+c,b+d)_T\right)\\
\end{align*}

The Frohman-Gelca formula now follows from the fact that $\psi$ is an isomorphism.

\end{proof}

% ==============================================================================
% REFERENCES

\bibliographystyle{plain}
\bibliography{bibliothese}

\begin{thebibliography}{10}

\bibitem{APS}
M.~M. Asaeda, J.~H. Przytycki, and A.~S. Sikora.
\newblock Categorification of the {K}auffman bracket skein module of
  {$I$}-bundles over surfaces.
\newblock {\em Algebr. Geom. Topol.}, 4:1177--1210 (electronic), 2004.
\newblock \href{http://arxiv.org/abs/math/0409414}{arXiv:math/0409414}.

\bibitem{BW1}
F.~Bonahon and H.~Wong.
\newblock Representations of the kauffman skein algebra i: invariants and
  miraculous cancellations.
\newblock {\em arXiv preprint arXiv:1206.1638}, 2012.
\newblock \href{http://arxiv.org/abs/1206.1638}{arXiv:1206.1638}.

\bibitem{BW2}
F.~Bonahon and H.~Wong.
\newblock Representations of the kauffman skein algebra ii: punctured surfaces.
\newblock {\em arXiv preprint arXiv:1206.1639}, 2012.
\newblock \href{http://arxiv.org/abs/1206.1639}{arXiv:1206.1639}.

\bibitem{BuFK}
D.~Bullock, C.~Frohman, and J.~Kania-Bartoszy{\'n}ska.
\newblock Understanding the {K}auffman bracket skein module.
\newblock {\em J. Knot Theory Ramifications}, 8(3):265--277, 1999.
\newblock \href{http://arxiv.org/abs/q-alg/9604013}{arXiv:q-alg/9604013}.

\bibitem{BullPr}
D.~Bullock and J.~H. Przytycki.
\newblock Multiplicative structure of {K}auffman bracket skein module
  quantizations.
\newblock {\em Proc. Amer. Math. Soc.}, 128(3):923--931, 2000.
\newblock \href{http://arxiv.org/abs/math/9902117}{arXiv:math/9902117}.

\bibitem{FoPy}
S.~{Fomin} and P.~{Pylyavskyy}.
\newblock {Webs on surfaces, rings of invariants, and clusters}.
\newblock {\em ArXiv e-prints}, August 2013.
\newblock \href{http://arxiv.org/abs/1308.1718}{arXiv:1308.1718}.

\bibitem{FG}
C.~Frohman and R.~Gelca.
\newblock Skein modules and the noncommutative torus.
\newblock {\em Trans. Amer. Math. Soc.}, 352(10):4877--4888, 2000.
\newblock \href{http://arxiv.org/abs/math/9806107}{arXiv:math/9806107}.

\bibitem{Jones}
V.~Jones.
\newblock A polynomial invariant for knots via von {N}eumann algebras.
\newblock {\em Bull. Amer. Math. Soc. (N.S.)}, 12(1):103--111, 1985.

\bibitem{KauffBracket}
L.H. Kauffman.
\newblock State models and the {J}ones polynomial.
\newblock {\em New developments in the theory of knots}, 11:162, 1990.

\bibitem{Prz91}
J.~H. Przytycki.
\newblock Skein modules of {$3$}-manifolds.
\newblock {\em Bull. Polish Acad. Sci. Math.}, 39(1-2):91--100, 1991.

\bibitem{Prz98}
J.~H. Przytycki.
\newblock A {$q$}-analogue of the first homology group of a {$3$}-manifold.
\newblock In {\em Perspectives on quantization ({S}outh {H}adley, {MA}, 1996)},
  volume 214 of {\em Contemp. Math.}, pages 135--144. Amer. Math. Soc.,
  Providence, RI, 1998.

\bibitem{Prz_skein}
J.~H. {Przytycki}.
\newblock {Fundamentals of Kauffman bracket skein modules}.
\newblock {\em Kobe Math. J.}, 16(1):45--66, 1999.
\newblock \href{http://arxiv.org/abs/math/9809113}{arXiv:math/9809113}.

\bibitem{Queff_PhD}
H.~{Queffelec}.
\newblock {Sur la cat\'egorification des invariants quantiques sln : \'etude
  alg\'ebrique et diagrammatique}.
\newblock {\em PhD thesis from the Universit\'e Paris 7}, December 2013.

\bibitem{Sal1}
P.~Sallenave.
\newblock Structure of the {K}auffman bracket skein algebra of {$T^2\times I$}.
\newblock {\em J. Knot Theory Ramifications}, 8(3):367--372, 1999.

\bibitem{MR1834496}
P.~Sallenave.
\newblock On the {K}auffman bracket skein algebra of parallelized surfaces.
\newblock {\em Ann. Sci. \'Ecole Norm. Sup. (4)}, 33(5):593--610, 2000.

\bibitem{ThurstonSkein}
D.~P. {Thurston}.
\newblock {A positive basis for surface skein algebras}.
\newblock {\em ArXiv e-prints}, October 2013.
\newblock \href{http://arxiv.org/abs/1310.1959}{arXiv:1310.1959}.

\end{thebibliography}

% ==============================================================================
%
\end{document}